\titleformat{\section}[block]{\large\scshape\centering}{\thesection.}{1em}{}
\newcommand{\pmat}[1]{\begin{pmatrix}#1\end{pmatrix}}
\newcommand{\T}{\Tilde}
\newcommand{\G}[1]{\left\langle#1\right\rangle}
\newcommand{\codim}{\mathrm{codim}}
\newcommand{\Span}{\mathrm{span}}
\newtheorem{theorem}{Theorem}[section]
\newtheorem{corollary}{Corollary}[section]
\newtheorem{lemma}{Lemma}[section]
\theoremstyle{definition}
\newtheorem{definition}{Definition}[section]
\theoremstyle{definition}
\newtheorem{example}{Example}[section]
\theoremstyle{definition}
\newtheorem{remark}{Remark}[section]
\theoremstyle{definition}
\newtheorem{open}{Open Problem}
\newcommand{\Addresses}{{
  \bigskip

  P.~Emerick (Corresponding author), \textsc{Departamento de Matemática, Universidade Federal de Juiz de Fora,
    Juiz de Fora, Brazil}\par\nopagebreak
  \textit{E-mail address}
  \texttt{pedro.emerick@estudante.ufjf.br}

  \medskip

  L.A.~Belmonte , \textsc{Departamento de Matemática,
    Universidade de São Paulo Campus São Carlos, São Carlos, Brazil}\par\nopagebreak
  \textit{E-mail address}
  \texttt{luanarjuna@usp.br}

}}
\providecommand{\keywords}[1]
{	
  \textbf{\small{Keywords}} #1
}
\providecommand{\MSC}[1]
{	
  \textbf{\small{2020 MSC}} #1
}
\title{Infinite Lineability: On the Abundance of Dense Subspaces}
\date{}
\author{Pedro Emerick\textsuperscript{1}, Luan Arjuna Belmonte\textsuperscript{1}}
\begin{document}
    
\maketitle

\begin{abstract}
   In this paper, we investigate the concept of infinite dense-lineability recently introduced by M. Calderón-Moreno, P. Gerlach-Mena and J. Prado-Bassas. We answer a question posed by the authors about the equivalence between infinite (pointwise) dense-lineability and (pointwise) dense-lineability. We prove that the equivalence always holds in first-countable topological vector spaces and under some assumptions about the weight of the topology. However, the equivalence is not always true, as shown in an example. Furthermore, we introduce the notions of infinite $(\alpha,\beta)$-dense-lineability and infinite (strongly) dense-algebrability and obtain some analogous results in these cases. We also obtain a criterion for strongly dense-algebrability for sets of the form $X\setminus Y$, where $X$ is a free algebra and $Y$ is a free subalgebra of $X$.

\end{abstract}

\keywords{Dense-lineability, Algebrability, Topological Vector Space, Topological Algebra, Pointwise lineability}

\MSC{46B87, 15A03, 46A16, 46J30}
    
\footnotetext[1]{The author was financed in part by the Coordenação de Aperfeiçoamento de Pessoal de Nível Superior – Brazil (CAPES) – Finance Code 001}

\section{Introduction}\label{Int}

The study of lineability focuses on the search for large linear structures inside subsets of vector spaces that enjoy some pathological or special properties. This branch of research was initiated by V. Gurariy \cite{Gurariy66}, who proved that the set of nowhere differentiable functions on the interval $[0, 1]$ contains an infinite-dimensional subspace. After \cite{GURARIY2004,Aron2005}, lineability problems attracted much attention from researchers worldwide. Several variations of lineability were introduced, such as dense-lineability and spaceability (where we seek large dense linear structures and large closed linear structures, respectively). Another related research field is algebrability, i.e., the search for large subalgebras inside subsets of associative linear algebras.

To refer the reader to some articles on lineability, we have \cite{Botelho2011,Botelho2012,ARON2018186, Papathanasiou, FAVARO2020144}. General criteria for lineability are scarcer, but often very useful (see \cite{BERNALGONZALEZ20143997,araujo2024general}). Research on algebrability is plentiful and may be found, for instance, on \cite{Papathanasiou,Glab2013,Bes2020}. A wider exposure of lineability, algebrability and related subjects can be found in \cite{aron2015lineability}.

In more precise terms, given $X$ a vector space, $M$ a subset of $X$ and $\alpha$ a cardinal number, we say that $M$ is $\alpha$-lineable when there is an $\alpha$-dimensional subspace $Y$ such that $Y\subset M\cup\{0\}$.

Let $X$ be a topological vector space (TVS), that is, a vector space endowed with a topology for which its sum and scalar multiplication are continuous. We say that $M$ is $\alpha$-dense-lineable when there is an $\alpha$-dimensional dense subspace $Y$ such that $Y\subset M\cup\{0\}$. When $\alpha=\aleph_0$, we usually omit the cardinal number and say only that $M$ is lineable or dense-lineable.

With the development of the theory, it became clear that positive responses to lineability problems are quite common, even in very exotic sets. This made more restrictive notions necessary. The $(\alpha,\beta)$-lineability was introduced by V. Fávaro, D. Pellegrino and D. Tomaz (see \cite{Fávaro_a_b_lineability}), and the pointwise-lineability by D. Pellegrino and A. Raposo Jr (see \cite{Pellegrino_pointwise}). We define those notions below.

If $X$ is a vector space, $M$ is a subset of $X$ and $\alpha,\beta$ are cardinal numbers, we say that
\begin{itemize}
    \item $M$ is $(\alpha,\beta)$-lineable when $M$ is $\alpha$-lineable and, for each $\alpha$-dimensional subspace $W$ with $W\subset M\cup\{0\}$, there is a $\beta$-dimensional subspace $Y$ such that $W\subset Y\subset M\cup\{0\}$.
    \item $M$ is pointwise $\alpha$-lineable when, for each $x\in M$ there is an $\alpha$-dimensional subspace $Y$ such that $x\in Y\subset M\cup\{0\}$.
\end{itemize}

When $X$ is endowed with a topology, the concepts of $(\alpha,\beta)$-dense-lineability and pointwise $\alpha$-dense-lineability are defined naturally, i.e., requiring that $Y$ can be taken as a dense subspace.

Until recently published paper \cite{Calderon-Moreno2023} from M.  Calderón-Moreno, P. Gerlach-Mena and J. Prado-Bassas, the abundance of linear structures contained in lineable subsets was an overlooked question. To investigate that subject, the authors introduced the definitions of infinite pointwise lineability and infinite pointwise dense-lineability. They are established as follows.

If $X$ is a vector space, $M$ is a subset of $X$ and $\alpha\geq\aleph_0$ is a cardinal number, we say that $M$ is infinitely pointwise $\alpha$-lineable if, for every $x\in M$, there exists a family $\{Y_k\}_{k\in\mathbb N}$ of vector subspaces such that for each $k\in\mathbb N$, we have $\dim(Y_k)=\alpha$, $x\in Y_k\subset M\cup\{0\}$ and $Y_k\cap Y_l=\Span(x)$ for any $l\in\mathbb N$, $l\neq k$. When $X$ is endowed with a topology and each $Y_k$ is dense in $X$, we say that $M$ is infinitely pointwise $\alpha$-dense-lineable.

The following question was stated by the authors (\cite[Open Problem 1]{Calderon-Moreno2023}): Let $X$ be a topological vector space and $M\subset X$ be a (pointwise) $\alpha$-dense-lineable set. Is $M$ always infinitely (pointwise) $\alpha$-dense-lineable?

In this paper, we expand the concepts analyzed, introducing the idea of $\alpha$-infinite (pointwise) $\beta$-dense-lineability, that is, we require not only infinitely many $\beta$-dimensional dense subspaces in $M\cup\{0\}$, but also that there is at least $\alpha$, where $\alpha\geq\aleph_0$. We also introduce analogous definitions for infinite $(\alpha,\beta)$-dense-lineability.

Our main goal is to respond to the aforementioned problem by providing a positive partial answer for first-countable TVS (we prove the stronger equivalence between $\alpha$-infinite $\alpha$-dense-lineability and $\alpha$-dense-lineability and an analogous result for the pointwise case) and providing a counterexample where the equivalence fails. Furthermore, we prove that the analogous question formulated for infinite $(\alpha,\beta)$-dense-lineability has the same answer.

This paper is organized as follows. In Section \ref{Div TVS}, we provide a criterion for a TVS to have infinitely many dense subspaces that only intersect two-by-two at $\{0\}$ based on the relation between the weight of the topology and the dimension of the TVS. This result will be a key tool in Section \ref{inf lineability}. We also give an example of a TVS without any proper dense subspace. Section \ref{inf lineability} addresses conditions for the equivalence between infinite dense-lineability and dense-lineability as mentioned above.

In Section \ref{alg}, we formally define algebrability, introduce the concept of infinite dense-algebrability and prove that there is an equivalence between strongly dense-algebrability and infinite strongly dense-algebrability when the algebra is first-countable. We also provide an example showing that infinite dense-algebrability is not always equivalent to dense-algebrability. As part of the efforts to prove the main results of the section, we obtain a general criterion for strongly dense-algebrability in sets of the form $X\setminus Y$, where $X$ is a commutative topological free algebra and $Y$ is a free subalgebra of $X$.

\section{Division of a topological vector space in dense subspaces}\label{Div TVS}

The fact that infinite pointwise $\alpha$-lineability is equivalent to pointwise $\alpha$-lineability, when $\alpha\geq\aleph_0$, was proven in \cite[Proposition 2.1]{Calderon-Moreno2023}. The key to this result is the possibility of dividing an infinite-dimensional vector space into infinitely many linearly independent subspaces with maximal dimension. Therefore, if we can split a TVS as before, but with dense subspaces, we can repeat a similar argument for infinite pointwise $\alpha$-dense-lineability.

Before proceeding, we recall that the weight of a topological space $X$, denoted by $w(X)$, is the smallest cardinality of a base for its topology. Additionally, throughout this work, the vector spaces are taken over $\mathbb K$ where $\mathbb K=\mathbb R$ or $\mathbb K=\mathbb C$. Furthermore, the cardinality of a set $M$ is denoted by $|M|$ and we use the notation $\aleph_0:=|\mathbb N|$.

As we prove in Theorem \ref{many dense subspaces}, if $X$ is a TVS and $w(X)\le\dim(X)=\alpha$, then $X$ can be divided into $\alpha$ linearly independent subspaces, all dense and $\alpha$-dimensional. An example of an infinite-dimensional TVS without any proper dense subspace (Example \ref{no dense subspace}) is also presented.

We start with an adaptation of \cite[Lemma 3.1]{Leonetti_weight_dense_lineability}, which will play an important role in the proof of Theorem \ref{many dense subspaces}.

\begin{lemma}\label{X-Y dense lineable}
    Let $X$ be a topological vector space and $Y$ a linear subspace such that $w(X)\le\codim(Y)=\alpha$. Then $X\setminus Y$ is $\alpha$-dense-lineable in $X$.
\end{lemma}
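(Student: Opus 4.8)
The plan is to produce the required $\alpha$-dimensional dense subspace of $X$ contained in $(X\setminus Y)\cup\{0\}$ in two stages, following the scheme of \cite[Lemma 3.1]{Leonetti_weight_dense_lineability} but carrying out the cardinal bookkeeping with $\codim(Y)$ in place of its special value there: first build a ``skeleton'' subspace $Z_0$ of dimension $w(X)$ that is already dense and meets $Y$ only at $0$, and then enlarge $Z_0$ to dimension $\alpha$ while preserving disjointness from $Y$.

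For the skeleton, set $\kappa=w(X)$ and fix a base $\{V_\gamma:\gamma<\kappa\}$ of nonempty open subsets of $X$. By transfinite recursion on $\gamma<\kappa$ we choose $v_\gamma\in V_\gamma$ with $v_\gamma\notin W_\gamma+Y$, where $W_\gamma:=\Span\{v_\delta:\delta<\gamma\}$. This choice is always possible: at stage $\gamma$ one has $\dim W_\gamma\le|\gamma|<\kappa\le\alpha=\codim(Y)$, so $W_\gamma+Y$ is a \emph{proper} subspace of $X$ (otherwise $\codim(Y)\le\dim W_\gamma<\alpha$, a contradiction); and a proper linear subspace of a TVS has empty interior, since a subspace containing a neighborhood of one of its points contains a neighborhood of $0$ and hence, by absorption of $0$-neighborhoods, all of $X$. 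Thus $V_\gamma\not\subset W_\gamma+Y$ and $v_\gamma$ exists. Put $Z_0:=\Span\{v_\gamma:\gamma<\kappa\}$. One then checks the routine facts: the $v_\gamma$ are linearly independent (as $v_\gamma\notin W_\gamma$), so $\dim Z_0=\kappa$; $Z_0\cap Y=\{0\}$, because if a nonzero $z\in Z_0$ has largest index $\gamma$ occurring with nonzero coefficient, then $z\in Y$ would force $v_\gamma\in W_\gamma+Y$; and $Z_0$ is dense, because every nonempty open set contains some $V_\gamma$, and $v_\gamma\in V_\gamma\cap Z_0$.

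For the enlargement, let $q\colon X\to X/Y$ be the quotient map; since $Z_0\cap Y=\{0\}$, $q|_{Z_0}$ is injective, so $\{q(v_\gamma):\gamma<\kappa\}$ is linearly independent in $X/Y$. Extend it to a basis $\mathcal B$ of $X/Y$, which has dimension $\codim(Y)=\alpha$; for each $b\in\mathcal B$ not of the form $q(v_\gamma)$ pick a preimage $g_b\in q^{-1}(b)$, and set $Z:=Z_0+\Span\{g_b\}$. The spanning vectors of $Z$ map under $q$ bijectively onto the basis $\mathcal B$, so $q|_Z$ is injective; hence $Z\cap Y=\ker q\cap Z=\{0\}$ and $\dim Z=\dim(X/Y)=\alpha$, while $Z\supset Z_0$ is still dense. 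Then $Z$ witnesses that $X\setminus Y$ is $\alpha$-dense-lineable.

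The step I expect to be the only real obstacle is the recursion step, i.e.\ ensuring that each basic open set $V_\gamma$ contains a point avoiding the subspace $W_\gamma+Y$. This needs the cardinal hypothesis $w(X)\le\codim(Y)$ to keep $W_\gamma+Y$ a proper subspace at every stage (only fewer than $\codim(Y)$ vectors have been adjoined to $Y$), together with the topological fact that proper subspaces of a TVS have empty interior; once this is secured, linear independence, disjointness from $Y$, density of $Z_0$, and the padding up to dimension $\alpha$ are all straightforward linear algebra.
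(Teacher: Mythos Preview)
Your argument is correct and rests on exactly the same mechanism as the paper's proof: a transfinite recursion that picks, inside each basic open set, a vector avoiding $Y$ together with the span of the previously chosen vectors, using that proper subspaces of a TVS have empty interior and that fewer than $\alpha$ vectors have been adjoined at each stage. The only difference is organizational: the paper indexes the recursion directly by $\alpha$ (listing the basis with repetitions, since $w(X)\le\alpha$) and thus obtains an $\alpha$-dimensional dense subspace in a single pass, whereas you first run the recursion only up to $w(X)$ to secure density and then pad the resulting subspace to dimension $\alpha$ via the quotient $X/Y$. Both routes are equally valid; the paper's is a bit shorter, while your two-stage version makes the roles of the two hypotheses (weight bound for density, codimension for dimension) more transparent.
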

\begin{proof}
    Let $\{U_\kappa\}_{\kappa<\alpha}$ be a topological basis for $X$ (we do not require index uniqueness). Assume that every $U_\kappa$ is non-empty. We build by transfinite induction elements $\{x_\kappa\}_{\kappa<\alpha}$ such that
    $$x_\kappa\in B_\kappa\setminus\Span(Y\cup\{x_\lambda\}_{\lambda<\kappa}).$$
    As $Y\subsetneq X$, $\mathrm{int}(Y)=\varnothing$ and there is $x_0\in B_0\setminus Y$. Let $\kappa<\alpha$ and suppose, by transfinite induction hypothesis, that $\{x_\lambda\}_{\lambda<\kappa}$ is already defined. Let $Y_\kappa:=\Span(Y\cup\{x_\lambda\}_{\lambda<\kappa})$. Note that $\codim(Y_\kappa)=\codim(Y)=\alpha$, so $Y_\kappa\subsetneq X$ and $\mathrm{int}(Y)=\varnothing$, hence, there is $x_\kappa\in B_\kappa\setminus Y_\kappa$. This shows the existence of the vectors $\{x_\kappa\}_{\kappa<\alpha}$. The subset $\{x_\kappa\}_{\kappa<\alpha}$ is linearly independent and dense in $X$, therefore, $Z:=\Span(\{x_\kappa\}_{\kappa<\alpha})\subset (X\setminus Y)\cup\{0\}$ is $\alpha$-dimensional and dense in $X$; thus, $X\setminus Y$ is $\alpha$-dense-lineable in $X$.
    
\end{proof}

\begin{theorem}\label{many dense subspaces}
    Let $X$ be a topological vector space such that $w(X)\leq\dim(X)=\alpha$. Then, there is a family $\{Y_\kappa\}_{\kappa<\alpha}$ such that:
    \begin{enumerate}
        \item $Y_\kappa$ is an $\alpha$-dimensional dense linear subspace of $X$ for all $\kappa<\alpha$;
        \item $\{Y_\kappa\}_{\kappa<\alpha}$ is linearly independent (in particular, $Y_{\kappa_1}\cap Y_{\kappa_2}=\{0\}$ when $\kappa_1,\kappa_2<\alpha$ and $\kappa_1\neq\kappa_2$).
    \end{enumerate}
\end{theorem}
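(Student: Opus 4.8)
The plan is to build the family $\{Y_\kappa\}_{\kappa<\alpha}$ by transfinite recursion, at each stage peeling off an $\alpha$-dimensional dense subspace from a suitably chosen complement, using Lemma \ref{X-Y dense lineable} as the engine. First I would fix a Hamel basis $\{e_\xi\}_{\xi<\alpha}$ of $X$ and partition the index set $\alpha$ into $\alpha$ pairwise disjoint subsets $\{A_\kappa\}_{\kappa<\alpha}$, each of cardinality $\alpha$ (this is possible since $\alpha\cdot\alpha=\alpha$ for infinite $\alpha$). Write $V_\kappa:=\Span(\{e_\xi:\xi\in A_\kappa\})$, so that $X=\bigoplus_{\kappa<\alpha}V_\kappa$ with each $V_\kappa$ of dimension $\alpha$. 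The subspaces $V_\kappa$ are automatically linearly independent, but they are not dense; the idea is to "fatten" each $V_\kappa$ into a dense subspace $Y_\kappa$ while controlling the fattening so that linear independence survives.

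The key observation is that for each $\kappa$, the complementary subspace $W_\kappa:=\bigoplus_{\mu\neq\kappa}V_\mu$ has codimension $\alpha$ (equal to $\dim V_\kappa$), and $w(X)\le\dim(X)=\alpha$, so Lemma \ref{X-Y dense lineable} applies with $Y=W_\kappa$: the set $X\setminus W_\kappa$ is $\alpha$-dense-lineable. Inspecting the proof of that lemma, the dense $\alpha$-dimensional subspace it produces is spanned by vectors $\{x^{(\kappa)}_\lambda\}_{\lambda<\alpha}$ chosen successively to avoid the span of $W_\kappa$ together with the previously chosen $x$'s; in particular each $x^{(\kappa)}_\lambda\notin W_\kappa$. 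I would run these $\alpha$ transfinite inductions simultaneously (or, equivalently, interleave them into one induction over $\alpha\times\alpha$ with a bijection to $\alpha$), enlarging at step $(\kappa,\lambda)$ the forbidden subspace to be the span of $W_\kappa$ together with all $x$'s chosen so far across all the $\kappa$'s. Since at every stage the accumulated span still has codimension $\alpha$ inside $X$ (we have added fewer than $\alpha$ vectors, and $\codim W_\kappa=\alpha$), it has empty interior, so a suitable $x^{(\kappa)}_\lambda$ in the $\lambda$-th basic open set still exists. Setting $Y_\kappa:=\Span(\{x^{(\kappa)}_\lambda\}_{\lambda<\alpha})$, each $Y_\kappa$ is $\alpha$-dimensional and dense, giving (1).

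For (2), linear independence of the family $\{Y_\kappa\}_{\kappa<\alpha}$ means: whenever $y_{\kappa_1}+\cdots+y_{\kappa_n}=0$ with $y_{\kappa_i}\in Y_{\kappa_i}$ and the $\kappa_i$ distinct, each $y_{\kappa_i}=0$. This is where the careful interleaving pays off: because every generator $x^{(\kappa)}_\lambda$ was chosen outside the span of all generators selected before it (in the global well-ordering of $\alpha\times\alpha$) together with $W_\kappa$, one can argue that the full collection $\bigcup_{\kappa<\alpha}\{x^{(\kappa)}_\lambda\}_{\lambda<\alpha}$ is linearly independent, and moreover any finite relation mixing generators from different $Y_\kappa$'s forces all coefficients to vanish. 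Concretely, suppose such a relation holds; look at the generator $x^{(\kappa_0)}_{\lambda_0}$ appearing with a nonzero coefficient that is maximal in the global order. All other generators appearing either come earlier in the order, or come from a different block $\kappa\neq\kappa_0$; in the latter case rewrite the relation to express $x^{(\kappa_0)}_{\lambda_0}$ as a combination of earlier generators plus a vector of $W_{\kappa_0}$ — wait, this needs the forbidden set at stage $(\kappa_0,\lambda_0)$ to have contained those other-block generators, which it does exactly when they precede $(\kappa_0,\lambda_0)$. So I would instead take $x^{(\kappa_0)}_{\lambda_0}$ to be the \emph{last} generator (in the global order) appearing with nonzero coefficient; then every other generator in the relation precedes it and lies in the forbidden subspace used to choose it, together with $W_{\kappa_0}$ containing nothing relevant here, contradicting $x^{(\kappa_0)}_{\lambda_0}\notin\Span(\text{forbidden set})$. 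This yields that the entire union of generators is linearly independent, which in particular gives $Y_{\kappa_1}\cap Y_{\kappa_2}=\{0\}$ for $\kappa_1\neq\kappa_2$ and hence (2).

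I expect the main obstacle to be the bookkeeping in the simultaneous transfinite induction — making sure the forbidden subspace at each stage genuinely has codimension $\alpha$ (so Lemma \ref{X-Y dense lineable}'s mechanism still finds a point in each basic open set) and that the chosen global order makes the independence argument in (2) go through cleanly. A clean way to handle this is to fix once and for all a bijection $\phi:\alpha\to\alpha\times\alpha$, carry out one transfinite induction over $\alpha$, and at step $\eta<\alpha$ with $\phi(\eta)=(\kappa,\lambda)$ choose $x^{(\kappa)}_\lambda$ in the $\lambda$-th open set of $X$ avoiding the span of $W_\kappa$ together with $\{x^{(\kappa')}_{\lambda'}:\phi^{-1}(\kappa',\lambda')<\eta\}$; the cardinality of that avoided set is $<\alpha+\alpha=\alpha$ added to a codimension-$\alpha$ space, so it is still a proper subspace with empty interior, and the choice is possible. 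Everything else is then routine.
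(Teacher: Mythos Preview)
Your approach is correct and takes a genuinely different route from the paper's. The paper builds the $Y_\kappa$ \emph{sequentially}: at stage $\kappa$ it applies Lemma~\ref{X-Y dense lineable} as a black box to obtain the whole of $Y_\kappa$ at once, disjoint from $(\bigoplus_{\mu>\kappa}X_\mu)\oplus(\bigoplus_{\mu<\kappa}Y_\mu)$. The reason for carrying the unused placeholders $X_\mu$ ($\mu>\kappa$) is that $\bigoplus_{\mu<\kappa}Y_\mu$ by itself already has dimension $\alpha$ as soon as $\kappa\ge 1$, so one cannot see directly that it is proper; the direct-sum invariant with the $X_\mu$'s is what guarantees codimension $\ge\dim X_\kappa=\alpha$, at the price of a short verification that the invariant persists at limit stages. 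You instead \emph{interleave}, choosing single generators $x^{(\kappa)}_\lambda$ along a bijection $\alpha\to\alpha\times\alpha$. This buys you that at step $\eta$ only $|\eta|<\alpha$ vectors have been chosen, so their span is automatically a proper subspace --- no placeholders needed, no limit-stage check --- and linear independence of the whole family drops out of the ``last-in-global-order'' argument you give. Indeed your $V_\kappa$ and $W_\kappa$ are vestigial: you may delete $W_\kappa$ from the forbidden set at every step and the argument goes through unchanged (you essentially notice this yourself when you write ``$W_{\kappa_0}$ containing nothing relevant here''). The paper's version is more modular; yours, once trimmed, is shorter.
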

\begin{proof}
    Let $X = \bigoplus_{\kappa<\alpha} X_\kappa$ with $\dim(X_\kappa)=\alpha$ for each $\kappa<\alpha$. We build by transfinite induction a family $\{Y_\kappa\}_{\kappa<\alpha}$ of $\alpha$-dimensional dense linear subspaces of $X$ such that for each $\kappa<\alpha$ the following sum is direct
    \begin{equation}\label{HI}
        \left(\bigoplus_{\mu>\kappa} X_\mu\right)\oplus\left(\bigoplus_{\mu\le\kappa} Y_\mu\right).
    \end{equation}
    
    Let $\kappa<\alpha$ and suppose, by transfinite induction hypothesis, that $\{Y_\lambda\}_{\lambda<\kappa}$ is already defined such that for each $\lambda<\kappa$ the sum $(\bigoplus_{\mu>\lambda} X_\mu)\oplus(\bigoplus_{\mu\le\lambda} Y_\mu)$ is direct. Then $S := (\bigoplus_{\mu\ge\kappa} X_\mu)\oplus(\bigoplus_{\mu<\kappa} Y_\mu)$ is a direct sum. In fact, let $s\in S$ be so that
    $$\begin{aligned}
        s
        &= x_1+x_2+\dots+x_n+y_1+y_2+\dots+y_m\\
        &= \T x_1+\T x_2+\dots+\T x_n+\T y_1+\T y_2+\dots+\T y_m
    \end{aligned}$$
    where $x_i,~\T x_i\in X_{\zeta_i}$ and $y_j,~\T y_j\in Y_{\eta_j}$ with $\zeta_i \ge \kappa > \eta_j$, $i\in\{1, \dots, n\}$, $j\in\{1, \dots, m\}$. Let $\lambda$ be so that $\zeta_i\ge\kappa>\lambda\ge\eta_j$ for all $ i\in\{1, \dots, n\}$ and $j\in\{1, \dots, m\}$. Because the sum $(\bigoplus_{\mu>\lambda} X_\mu)\oplus(\bigoplus_{\mu\le\lambda} Y_\mu)$ is direct, then $x_i = \T x_i$ and $y_j = \T y_j$ for all $i\in\{1, \dots, n\}$ and $j\in\{1, \dots, m\}$, as we wanted.
    
    Then, we have $\codim((\bigoplus_{\lambda>\kappa}X_\lambda)\oplus(\bigoplus_{\lambda<\kappa} Y_\lambda))\ge \dim(X_\kappa)=\alpha\ge w(X)$, so, by Lemma \ref{X-Y dense lineable}, there is an $\alpha$-dimensional dense subspace $Y_\kappa$ such that $Y_\kappa\cap((\bigoplus_{\lambda>\kappa} X_\lambda)\oplus(\bigoplus_{\lambda<\kappa} Y_\lambda)) = \{0\}$, therefore
    $$Y_\kappa\oplus\left(\left(\bigoplus_{\lambda>\kappa} X_\lambda\right)\oplus\left(\bigoplus_{\lambda<\kappa} Y_\lambda\right)\right) = \left(\bigoplus_{\lambda>\kappa} X_\lambda\right)\oplus\left(\bigoplus_{\lambda\le\kappa} Y_\lambda\right)$$
    is a direct sum. This shows the existence of the family $\{Y_\kappa\}_{\kappa<\alpha}$.
\end{proof}

The next example shows that we cannot remove the hypothesis $w(X)\leq\dim(X)$.

\begin{example}\label{no dense subspace}
    There are TVSs with no proper dense subspaces.

    For each $n\in\mathbb N$, let $u_n:\mathbb R^n\rightarrow \mathbb R^{\mathbb N}$ denote the inclusion map $u_n(x_1,\dots,x_n)=(x_1,\dots,x_n,0,0,\dots)$. In each $u_n(\mathbb R^n)$ we consider the topology induced by $u_n$. Let $\mathbb R^\infty := \bigcup_{n=1}^\infty u_n(\mathbb R^n)$ be the set of all sequences with finite support endowed with the inductive limit topology. For more details on that topology, we recommend \cite[Chapter 12]{beckenstein2010} and \cite[Chapter 5]{robertson1973}.
    
    Then $\mathbb R^\infty$ is a TVS and $S\subseteq\mathbb R^\infty$ is open (resp. closed) if and only if for every $n\in\mathbb N$, $S\cap u_n(\mathbb R^n)$ is open (resp. closed) in $u_n(\mathbb R^n)$ \cite[Proposition 5.4]{robertson1973}.

    Let us show that $w(\mathbb R^\infty)>\aleph_0 = \dim(\mathbb R^\infty)$. Let $\{e_n\}_{n\in\mathbb N}$ be the canonical basis for $\mathbb R^\infty$ and $\{U_n\}_{n\in\mathbb N}$ be a countable family of open neighborhoods of $0$. Since $\mathbb R^\infty$ is a TVS, for each $n\in\mathbb N$ there is a non-zero $r_n\in\mathbb R$ such that $r_ne_n\in U_n$. Let $F:=\{r_ne_n:n\in\mathbb N\}$. Then, for each $n\in\mathbb N$, $F\cap u_n(\mathbb R^n)$ is finite (hence closed), therefore, $F$ is closed in $\mathbb R^\infty$ and $U:=\mathbb R^\infty\setminus F$ is an open neighborhood of $0$ that does not contain any $U_n$. This shows that there is no countable neighborhood basis for $0$, so $\mathbb R^\infty$ is not even first-countable.

    Now, we verify that there is no proper dense subspace of $\mathbb R^\infty$. Let $W$ be a subspace of $\mathbb R^\infty$. Then, for each $n\in\mathbb N$, $\dim(W\cap u_n(\mathbb R^n))\le \dim(u_n(\mathbb R^n))=n<\infty$. Therefore, for each $n\in\mathbb N$, $W\cap u_n(\mathbb R^n)$ is closed, i.e., $W$ is closed in $\mathbb R^\infty$.
    We conclude that $W$ is dense if and only if $W=\mathbb R^\infty$.
\end{example}

Motivated by the example above, one could conjecture that an infinite-dimensional TVS has infinitely many proper dense subspaces if and only if $w(X)\le \dim(X)$. Let us show that this is not the case.

\begin{example}\label{d<w with dense subspaces}
Let $V$ be an $\aleph_0$-dimensional vector space endowed with the trivial topology. Then $V$ is a TVS.
In this setting, the vector space $X:=\mathbb R^\infty\times V$ endowed with the product topology is a TVS \cite[Example 4.7.1]{beckenstein2010}.
Because $\mathbb R^\infty$ is not first-countable, the same holds for $X$. On the other hand, since $\dim(V)=\dim(\mathbb R^\infty)=\aleph_0$, we have $\dim(X)=\aleph_0$, so $\dim(X)<w(X)$.

Let us show now that $X$ has infinitely many dense subspaces whose pairwise intersection is trivial. Let $\mathcal B:=\{e_m:m\in\mathbb N\}$ and $\mathcal C:=\{f_m^n: m,n\in\mathbb N\}$ be a basis for $\mathbb R^\infty$ and $V$, respectively. Let $X_n:=\Span(\{(e_m, f_m^n):m\in\mathbb N\})$. Then $X_i\cap X_j=\{0\}$ since $\Span(\mathcal C_i)\cap\Span(\mathcal C_j)=\{0\}$, $i\ne j$. Moreover, each $X_n$ is dense in $X$ since $\{U\times V: U\subset\mathbb R^\infty\textnormal{ open}\}$ is a topological basis for $X$ and, given $x\in U\subset \mathbb R^\infty$ open, there is $v\in V$ such that $(x, v)\in X_n$, therefore $(U\times V)\cap X_n\ne\varnothing$.

\end{example}

\section{Applications to infinite dense-lineability}\label{inf lineability}

We are interested in analyzing some new concepts of infinite lineability in addition to those presented in \cite{Calderon-Moreno2023}; therefore, we propose the following definitions.

\begin{definition}\label{def infinite lineability}
    Given a topological vector space $X$, a subset $M$ of $X$ and cardinal numbers $\alpha,\beta,\gamma$, with $\alpha\ge\aleph_0$ and $\beta>\gamma$, we say that $M$ is
    \begin{itemize}
        \item $\alpha$-infinitely $\beta$-dense-lineable when there is a family $\{Y_\kappa\}_{\kappa<\alpha}$ of $\beta$-dimensional dense subspaces of $X$ with $Y_\kappa\subset M\cup\{0\}$ for each $\kappa<\alpha$ and $Y_{\kappa_1}\cap Y_{\kappa_2}=\{0\}$ when $\kappa_1\neq\kappa_2$.
        \item $\alpha$-infinitely pointwise $\beta$-dense-lineable when for each $x\in M$ there is a family $\{Y_\kappa\}_{\kappa<\alpha}$ of $\beta$-dimensional dense subspaces of $X$ with $x\in Y_\kappa\subset M\cup\{0\}$ for each $\kappa<\alpha$ and $Y_{\kappa_1}\cap Y_{\kappa_2}=\Span(x)$ when $\kappa_1\neq\kappa_2$.
        \item $\alpha$-infinitely $(\gamma,\beta)$-dense-lineable when $M$ is $\gamma$-lineable and, for each $\gamma$-dimensional subspace $W$ contained in $M\cup\{0\}$, there is a family $\{Y_\kappa\}_{\kappa<\alpha}$ of $\beta$-dimensional dense subspaces of $X$ with $W\subset Y_\kappa\subset M\cup\{0\}$ for each $\kappa<\alpha$ and $Y_{\kappa_1}\cap Y_{\kappa_2}=W$ when $\kappa_1\neq\kappa_2$.
    \end{itemize}
\end{definition}

In Theorem \ref{equivalences1} and Theorem \ref{equivalences2} we provide conditions for the equivalence between $\alpha$-dense-lineability (resp. pointwise $\alpha$-dense-lineability, $(\gamma,\alpha)$-dense-lineability) and $\alpha$-infinite $\alpha$-dense-lineability (resp. $\alpha$-infinite pointwise $\alpha$-dense-lineability, $\alpha$-infinite $(\gamma,\alpha)$-dense-lineability).

In Corollary \ref{equivalence for pseudometrizable}, we prove that the answer for \cite[Open Problem 1]{Calderon-Moreno2023} is positive for every first-countable TVS and that an equivalent result for infinite $(\gamma,\alpha)$-dense-lineability also holds.

Example \ref{counterexample} proves that the answer for \cite[Open Problem 1]{Calderon-Moreno2023} is false in general.

\begin{theorem}\label{equivalences1}
    Let $X$ be a topological vector space, $M$ a subset of $X$ and $w(X)\leq\alpha$. Then $M$ is $\alpha$-dense-lineable if, and only if, $M$ is $\alpha$-infinitely $\alpha$-dense-lineable.
\end{theorem}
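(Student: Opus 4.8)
The plan is to reduce the statement to Theorem \ref{many dense subspaces}. One implication is immediate: if $M$ is $\alpha$-infinitely $\alpha$-dense-lineable and $\{Y_\kappa\}_{\kappa<\alpha}$ is a witnessing family, then $Y_0$ is an $\alpha$-dimensional dense subspace of $X$ contained in $M\cup\{0\}$, so $M$ is $\alpha$-dense-lineable. All the work is in the converse.

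So I would assume $M$ is $\alpha$-dense-lineable and fix an $\alpha$-dimensional dense subspace $Y$ of $X$ with $Y\subset M\cup\{0\}$. The idea is to manufacture the desired family entirely inside $Y$. Endow $Y$ with the subspace topology; then $Y$ is a topological vector space with $\dim(Y)=\alpha$. The one hypothesis of Theorem \ref{many dense subspaces} that must be verified is $w(Y)\le\dim(Y)$, and this holds because the restriction to $Y$ of a base of $X$ of minimal cardinality is a base of $Y$, whence $w(Y)\le w(X)\le\alpha=\dim(Y)$.

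Applying Theorem \ref{many dense subspaces} to $Y$ then yields a linearly independent family $\{Y_\kappa\}_{\kappa<\alpha}$ of $\alpha$-dimensional linear subspaces of $Y$, each dense in $Y$, with $Y_{\kappa_1}\cap Y_{\kappa_2}=\{0\}$ whenever $\kappa_1\ne\kappa_2$. It remains to observe that each $Y_\kappa$ is in fact dense in $X$: since $Y_\kappa$ is dense in $Y$ and $Y$ is dense in $X$, the closure of $Y_\kappa$ in $X$ contains $Y$, hence contains $\overline{Y}=X$. Combined with $Y_\kappa\subset Y\subset M\cup\{0\}$, this shows that $\{Y_\kappa\}_{\kappa<\alpha}$ witnesses that $M$ is $\alpha$-infinitely $\alpha$-dense-lineable.

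I do not expect any real obstacle here: once Theorem \ref{many dense subspaces} is available, the argument is pure bookkeeping. The two points worth stating explicitly are that the weight of a topological space does not increase when passing to a subspace (so the hypothesis transfers from $X$ to $Y$) and that density is transitive through an intermediate dense subspace (so density in $Y$ upgrades to density in $X$).
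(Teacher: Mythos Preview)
Your proof is correct and follows essentially the same route as the paper: take a witnessing dense $\alpha$-dimensional $Y\subset M\cup\{0\}$, note $w(Y)\le w(X)\le\alpha$, apply Theorem~\ref{many dense subspaces} inside $Y$, and use that density in $Y$ together with density of $Y$ in $X$ gives density in $X$. The only difference is that you spell out the trivial direction and the two routine facts (monotonicity of weight under subspaces and transitivity of density) more explicitly than the paper does.
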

\begin{proof}
    Let $Y\subset M\cup\{0\}$ be a dense subspace of $X$ with $\dim(Y)=\alpha$. Since $w(Y)\leq w(X)\leq\alpha$, by Theorem $\ref{many dense subspaces}$, there is a family $\{Y_{\kappa}\}_{\kappa<\alpha}$ of dense linear subspaces of $Y$ with $\dim(Y_\kappa)=\alpha$ for each $\kappa<\alpha$ and $Y_{\kappa_1}\cap Y_{\kappa_2}=\{0\}$ for $\kappa_1\neq\kappa_2$. Because $Y$ is dense in $X$, each $Y_{\kappa}$ is dense in $X$. Hence, $\{Y_\kappa\}_{\kappa<\alpha}$ is a family of $\alpha$-dimensional dense subspaces of $X$ contained in $M\cup\{0\}$, i.e., $M$ is $\alpha$-infinitely $\alpha$-dense-lineable. 
\end{proof}

\begin{theorem}\label{equivalences2}
    Let $X$ be a topological vector space, $M$ a subset of $X$ and $w(X)\leq\alpha$ where $\alpha\geq\aleph_0$. Then
    \begin{enumerate}[label=(\roman*)]
        \item $M$ is pointwise $\alpha$-dense-lineable if, and only if, $M$ is $\alpha$-infinitely pointwise $\alpha$-dense-lineable.
        \item $M$ is $(\gamma,\alpha)$-dense-lineable if, and only if, $M$ is $\alpha$-infinitely $(\gamma,\alpha)$-dense-lineable.
    \end{enumerate}
\end{theorem}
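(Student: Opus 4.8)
The plan is to prove both equivalences by a single device. In each part the reverse implication is immediate: given a family $\{Y_\kappa\}_{\kappa<\alpha}$ as in Definition \ref{def infinite lineability} one simply keeps one member $Y_0$, which is an $\alpha$-dimensional dense subspace of $X$ contained in $M\cup\{0\}$ with the required containment, so I would only spell out the forward implications.

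First I would set up the common data. In part (i), fix $x\in M$; the case $x=0$ is handled directly by applying Theorem \ref{many dense subspaces} to an $\alpha$-dimensional dense subspace $Y$ with $0\in Y\subset M\cup\{0\}$, so assume $x\neq 0$ and put $W:=\Span(x)$, $\dim W=1$. In part (ii), fix a $\gamma$-dimensional subspace $W\subset M\cup\{0\}$; here $\gamma<\alpha$ by the standing hypothesis $\beta>\gamma$ of Definition \ref{def infinite lineability} taken with $\beta=\alpha$. In both cases $\dim W<\alpha$, and by hypothesis there is an $\alpha$-dimensional dense subspace $Y$ of $X$ with $W\subset Y\subset M\cup\{0\}$. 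I would then endow $Y$ with the subspace topology and form the quotient topological vector space $Y/W$ with quotient map $\pi\colon Y\to Y/W$ (no closedness of $W$ is needed for $Y/W$ to be a TVS), recalling that $\pi$ is continuous, open and surjective.

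The heart of the argument is to transport a family produced by Theorem \ref{many dense subspaces} on $Y/W$ back to $Y$. I would first check the hypotheses of that theorem for $Y/W$: since passing to a subspace and applying a continuous open surjection do not increase the weight, $w(Y/W)\le w(Y)\le w(X)\le\alpha$; and from $\dim Y=\dim W+\dim(Y/W)$ with $\dim W<\alpha=\dim Y$ and $\alpha$ infinite one gets $\dim(Y/W)=\alpha$. Thus Theorem \ref{many dense subspaces} gives a linearly independent family $\{Z_\kappa\}_{\kappa<\alpha}$ of $\alpha$-dimensional dense subspaces of $Y/W$ with $Z_{\kappa_1}\cap Z_{\kappa_2}=\{0\}$ for $\kappa_1\neq\kappa_2$. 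Setting $Y_\kappa:=\pi^{-1}(Z_\kappa)$, I would then verify: $W=\pi^{-1}(\{0\})\subset Y_\kappa\subset Y\subset M\cup\{0\}$ (so in part (i) indeed $x\in Y_\kappa$); $\dim Y_\kappa=\dim W+\alpha=\alpha$; each $Y_\kappa$ is dense in $Y$ because the preimage of a dense set under an open continuous surjection is dense, and $Y$ is dense in $X$, so $Y_\kappa$ is dense in $X$; and $Y_{\kappa_1}\cap Y_{\kappa_2}=\pi^{-1}(Z_{\kappa_1}\cap Z_{\kappa_2})=\pi^{-1}(\{0\})=W$. This is exactly the family required, giving $\alpha$-infinite pointwise $\alpha$-dense-lineability in (i) and $\alpha$-infinite $(\gamma,\alpha)$-dense-lineability in (ii) (the $\gamma$-lineability clause of the latter being inherited verbatim from the hypothesis).

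The main obstacle I expect is the passage to the quotient: one must be sure that $Y/W$ really is a TVS with $w(Y/W)\le\dim(Y/W)$, so that Theorem \ref{many dense subspaces} is applicable, and that the preimages $\pi^{-1}(Z_\kappa)$ simultaneously inherit density, the correct dimension $\alpha$, and pairwise intersection \emph{exactly} $W$. The point of quotienting by $W$ is precisely that it forces the intersections to collapse to $W$ rather than to some larger subspace, which a naïve ``adjoin $W$ to each $Z_\kappa$'' construction would not control. Everything else — monotonicity of weight under subspaces and continuous open surjections, openness of $\pi$, the cardinal identity $\gamma+\alpha=\alpha$, and the degenerate case $x=0$ — is routine.
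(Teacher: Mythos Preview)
Your proof is correct, but the route differs from the paper's. The paper works entirely inside $Y$: it first invokes Lemma \ref{X-Y dense lineable} (with ambient space $Y$) to produce an $\alpha$-dimensional dense subspace $Z\subset (Y\setminus W)\cup\{0\}$, then applies Theorem \ref{many dense subspaces} to $Z$ to get $\{Z_\kappa\}_{\kappa<\alpha}$, and finally sets $Y_\kappa:=Z_\kappa\oplus W$, checking by hand that $Y_{\kappa_1}\cap Y_{\kappa_2}=W$ using that $Z_{\kappa_1},Z_{\kappa_2},W$ are linearly independent; part (i) is then deduced from part (ii) by observing that pointwise $\alpha$-dense-lineability is $(1,\alpha)$-dense-lineability. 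Your quotient device collapses this two-step construction: passing to $Y/W$ makes the intersection condition $Y_{\kappa_1}\cap Y_{\kappa_2}=W$ automatic (preimages under $\pi$ respect intersections and $\ker\pi=W$), and it bypasses Lemma \ref{X-Y dense lineable} altogether, at the small cost of checking that weight is monotone under continuous open surjections. The paper's argument is more concrete and self-contained within the ambient space; yours is structurally cleaner and makes the role of $W$ transparent.
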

\begin{proof}
    Assume $M$ is $(\gamma, \alpha)$-dense-lineable. Then, given a $\gamma$-dimensional subspace $W\subset M\cup\{0\}$, there is an $\alpha$-dimensional dense subspace $Y\subset M\cup\{0\}$ such that $W\subset Y$. Because $\dim(Y)=\alpha>\dim(W)$, we have $\codim_Y(W)=\alpha\ge w(X)\ge w(Y)$; therefore, by Lemma \ref{X-Y dense lineable}, there is an $\alpha$-dimensional dense subspace $Z\subset (Y\setminus W)\cup\{0\}$. Since $w(Z)\le w(Y)\le\alpha$, we can apply Theorem \ref{many dense subspaces} to obtain a family $\{Z_\kappa\}_{\kappa<\alpha}$ of $\alpha$-dimensional linearly independent dense subspaces of $Z$.
    
    For $\kappa<\alpha$, let $Y_\kappa:=Z_\kappa\oplus W\subset Y$. Of course, each $Y_\kappa$ is dense in $Y$; therefore, in $X$. Let $\kappa_1, \kappa_2<\alpha$, $\kappa_1\ne\kappa_2$, and $y\in Y_{\kappa_1}\cap Y_{\kappa_2}$. Then $y=z_1+w_1=z_2+w_2$ with $z_1\in Z_{\kappa_1}$, $z_2\in Z_{\kappa_2}$ and $w_1, w_2\in W$. However, $Z_{\kappa_1}$, $Z_{\kappa_2}$ and $W$ are linearly independent, therefore $z_1=z_2=0$ and $y=w_1=w_2\in W$. This shows that $Y_{\kappa_1}\cap Y_{\kappa_2} = W$; hence, $M$ is $\alpha$-infinitely $(\gamma, \alpha)$-dense-lineable.

    Now, assume $M$ is pointwise $\alpha$-dense-lineable. In particular, $M$ is $(1, \alpha)$-dense-lineable; therefore, $\alpha$-infinitely $(1, \alpha)$-dense-lineable, i.e., given $x\in M$ such that $\Span(x)\subset M\cup\{0\}$, there is a family $\{Y_\kappa\}_{\kappa<\alpha}$ of $\alpha$-dimensional dense subspaces of $X$ such that $Y_\kappa\subset M\cup\{0\}$ for each $\kappa<\alpha$ and $Y_{\kappa_1}\cap Y_{\kappa_2}=\Span(x)$. Since $\Span(x)\subset M\cup\{0\}$ for all $x\in M$, we have $M$ $\alpha$-infinitely pointwise $\alpha$-dense-lineable.

    The reverse implications are trivial.

\end{proof}

\begin{example}\label{counterexample}
    
    Consider the setting from Example \ref{d<w with dense subspaces}.
    Notice that $M=\mathbb R^\infty\times\{0\}$ is a dense subspace of $X$. Since $\mathbb R^\infty$ does not admit any proper dense subspace, $M$ is dense-lineable but has only one dense subspace inside.
    
    Furthermore, there are subsets of TVSs that are $n$-dense-lineable for all $n\in\mathbb N$, but not infinitely dense-lineable.

    Notice that for each $a\in\mathbb R^\infty$, there exists exactly one $b\in V$ such that $(a, b)\in X_n$. In fact, if $\lambda_1, \lambda_2, \dots, \lambda_k$ are scalars such that
    $$\begin{aligned}
    \pmat{a\\b}
    =\lambda_1\pmat{e_1\\f_1^n}+\lambda_2\pmat{e_2\\f_2^n}+\dots+\lambda_k\pmat{e_k\\f_k^n}
    =\pmat{\lambda_1 e_1+\lambda_2 e_2+\dots+\lambda_k e_k\\\lambda_1 f_1^n+\lambda_2 f_2^n+\dots+\lambda_k f_k^n}
    \end{aligned}$$
    then, since $\{e_m: m\in\mathbb N\}$ is a basis for $\mathbb R^\infty$, $(\lambda_1, \lambda_2, \dots, \lambda_k)$ is uniquely determined by $a$, therefore, so is $b$.

    This shows the projection map $\pi:X_n\subset\mathbb R^\infty\times V\to\mathbb R^\infty$ is bijective, therefore, $X_n$ is isomorphic to $\mathbb R^\infty$. Indeed, any projection is continuous and, since the open sets of $X_n$ have the form $(U\times V)\cap X_n$ with $U$ an open set of $\mathbb R^\infty$ and $\pi((U\times V)\cap X_n)=U$, $\pi$ is an open map.

    Now, let us define 
    $$W_n:=\Span(\{(e_m, f_m^n): m\le n\;\textrm{or}\; m>2n\}\cup\{(e_{n+k}, f_{n+k}^k): 1\le k\le n\}).$$
    It is clear by construction that each $W_n$, such as $X_n$, is dense in and isomorphic to $\mathbb R^\infty$. It follows that $W_n$ has no proper dense subspace. Additionally, for $n\in\mathbb N$, we have $W_n\cap W_m=\{0\}$ for $n<m\le 2n$ and $W_n\cap W_m\ne\{0\}$ for $m>2n$, i.e., a family $\mathcal W\subset \{W_n: n\in\mathbb N\}$ of subspaces with pairwise trivial intersection can be arbitrarily large, but never infinite.

    Let $M:=\bigcup_{n=1}^\infty W_n$ and $Y$ be a linear subspace in $M$. Then, since $M$ is a countable union of linear subspaces, $Y\subset W_n$ for some $n\in\mathbb N$, hence, if $Y$ is dense in $X$, then $Y=W_n$ for some $n\in\mathbb N$. This completes the proof.

\end{example}

\begin{remark}\label{no equivalence}
    Note that, because $M:=\mathbb R^\infty\times\{0\}$ is a linear subspace, $M$ is also pointwise dense-lineable and $(\gamma,\aleph_0)$-dense-lineable for all $\gamma<\aleph_0$. Then, for general TVSs and $\alpha\ge\aleph_0$, $\alpha>\gamma$, there is no equivalence between
    \begin{itemize}
        \item $\alpha$-dense-lineability and infinite $\alpha$-dense-lineability;
        \item pointwise $\alpha$-dense-lineability and infinite pointwise $\alpha$-dense-lineability;
        \item $(\gamma,\alpha)$-dense-lineability and infinite $(\gamma,\alpha)$-dense-lineability.
    \end{itemize}
\end{remark}

The last goal in that section is to prove that the class of TVSs where $\alpha$-infinite $\alpha$-dense-lineability and $\alpha$-dense-lineability are equivalent (and the analogous statements for $(\alpha,\beta)$-dense-lineability and pointwise $\alpha$-dense-lineability) includes all infinite-dimensional first-countable TVSs. The next theorem is a somewhat folkloric result whose demonstration we include for the sake of completeness.

\begin{theorem}\label{pseudometrizable}
 Let $X$ be an infinite-dimensional first-countable topological vector space. Then $w(X)\leq \dim(X)$.   
\end{theorem}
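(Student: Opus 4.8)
The plan is to exhibit a base for the topology of $X$ of cardinality at most $\alpha:=\dim(X)$. Two ingredients are needed: a countable neighborhood base at the origin, which is available because $X$ is first-countable, and a dense subset of $X$ of cardinality at most $\alpha$. Translating the members of the first family by the points of the second will then produce the desired base.

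First I would construct the dense set. Fix a Hamel basis $B$ of $X$, so $|B|=\alpha\ge\aleph_0$, and let $\mathbb Q_{\mathbb K}$ be a fixed countable dense subfield of $\mathbb K$ (that is, $\mathbb Q$ if $\mathbb K=\mathbb R$ and $\mathbb Q+i\mathbb Q$ if $\mathbb K=\mathbb C$). Set $D:=\Span_{\mathbb Q_{\mathbb K}}(B)$, the collection of all finite linear combinations of elements of $B$ with coefficients in $\mathbb Q_{\mathbb K}$. Since $\alpha$ is infinite, the family of finite subsets of $B$ has cardinality $\alpha$, and each finite subset contributes only countably many such combinations, whence $|D|\le\aleph_0\cdot\alpha=\alpha$. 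The one point that deserves care is that $D$ is dense in $X$: given $x=\sum_{i=1}^{n}c_ib_i$ and an open set $W\ni x$, the map $\phi\colon(t_1,\dots,t_n)\mapsto\sum_{i=1}^{n}t_ib_i$ from $\mathbb K^n$ to $X$ is continuous because addition and scalar multiplication are continuous, so $\phi^{-1}(W)$ is an open neighborhood of $(c_1,\dots,c_n)$ in $\mathbb K^n$ and therefore meets the dense subset $\mathbb Q_{\mathbb K}^n$; any rational point found there yields an element of $D\cap W$.

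Next I would assemble the base. Let $\{U_n\}_{n\in\mathbb N}$ be a countable neighborhood base at $0$, which, by the standard reductions available in a topological vector space, may be taken to consist of open symmetric sets. I claim that $\mathcal B:=\{\,d+U_n:d\in D,\ n\in\mathbb N\,\}$ is a base for the topology of $X$. Indeed, given an open set $W$ and a point $x\in W$, pick $U_n$ with $U_n\subset W-x$, then use continuity of addition at $(0,0)$ to find an open symmetric $U_m$ with $U_m+U_m\subset U_n$; density of $D$ gives some $d\in D\cap(x+U_m)$, and writing $d=x+u$ with $u\in U_m$ one checks that $x-d=-u\in U_m$ and $d+U_m\subset x+U_m+U_m\subset x+U_n\subset W$, so $x\in d+U_m\subset W$. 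Since $|\mathcal B|\le|D|\cdot\aleph_0=\alpha$, we conclude $w(X)\le\alpha=\dim(X)$.

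The only genuinely non-routine step is the density of $D$: it may look surprising that the $\mathbb Q_{\mathbb K}$-span of a Hamel basis is dense, and the argument relies squarely on continuity of the vector space operations — no completeness, metrizability, or separation axiom is used. Everything else is bookkeeping with neighborhood bases. One could alternatively invoke the Birkhoff--Kakutani theorem to replace first-countability by pseudometrizability and then quote the coincidence of weight and density for pseudometrizable spaces, but the direct route above keeps the proof self-contained.
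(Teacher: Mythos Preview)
Your proof is correct and follows essentially the same approach as the paper: both construct the dense set $D$ (the paper calls it $T$) as the $\mathbb{Q}_{\mathbb K}$-span of a Hamel basis and then take countably many basic neighborhoods around each of its points. The only minor difference is that the paper first invokes pseudometrizability and uses balls of rational radius, whereas you work directly with a countable local base at $0$ and its translates; your route is slightly more self-contained, but the underlying idea is identical.
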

\begin{proof}
By \cite[Theorem 4.8.3]{beckenstein2010}, every first-countable TVS is pseudometrizable.

We build a basis for the topology with cardinality $\dim(X)$. Let $\mathcal B$ be a basis of $X$ and $S\subset\mathbb K$ be a dense countable set. Let $T:=\{\lambda_1e_1+\dots+\lambda_ne_n:\lambda_1,\dots,\lambda_n\in S,~e_1,\dots,e_n\in\mathcal B\}$. Since $S$ is countable, we have $|T|=\dim(X)$.

Now, for each $x\in T$, define $C_x:=\{B(x, r): r\in\mathbb Q,\;r>0\}$, where $B(x, r)$ denotes the open ball of center $x$ and radius $r$ with respect to the pseudometric $d$ that generates the topology of $X$. Of course, each $C_x$ is countable, hence, $C:=\bigcup_{x\in T} C_x$ has cardinality $\dim(X)$.

We claim that $C$ is a topological basis for $X$. In fact, let $x:=\mu_1e_1+\dots+\mu_ne_n$, $\mu_1,\dots,\mu_n\in\mathbb K$, $e_1,\dots,e_n\in\mathcal B$. Because $S$ is dense in $\mathbb K$, there are sequences $(\lambda_m^i)_{m\in\mathbb N}$ in $S$ such that $\lim_{m\to\infty} \lambda_m^i = \mu_i$ for each $i\in\{1,\dots,n\}$. Due to the continuity of sum and scalar multiplication, we have $\lim_{m\to\infty}\lambda_m^1e_1+\dots+\lambda_m^ne_n = x$; therefore, $T$ is dense in $X$. Now, let $U$ be an open set. Since $X$ is pseudometrizable, there is $\varepsilon>0$ such that $B(x,\varepsilon)\subset U$ and, since $T$ is dense in $X$, there is $y\in B(x,\varepsilon)\cap T$. Let $\delta\in\mathbb Q$ be such that $\delta<\varepsilon-d(x, y)$. If $z\in B(y, \delta)\in C$, then $d(y, z)<\delta$ and $d(x, z)\le d(x, y)+d(y, z)<\varepsilon-\delta+\delta = \varepsilon$; thus, $B(y, \delta)\subset B(x, \varepsilon)\subset U$, and this completes our proof.

\end{proof}

\begin{corollary}\label{equivalence for pseudometrizable}
    Let $X$ be an infinite-dimensional first-countable topological vector space and $\alpha\geq\aleph_0$. Then:
   \begin{enumerate}[label=(\roman*)]
        \item $M$ is $\alpha$-dense-lineable if, and only if, $M$ is $\alpha$-infinitely $\alpha$-dense-lineable.
        \item $M$ is pointwise $\alpha$-dense-lineable if, and only if, $M$ is $\alpha$-infinitely pointwise $\alpha$-dense-lineable.
        \item $M$ is $(\gamma,\alpha)$-dense-lineable if, and only if, $M$ is $\alpha$-infinitely $(\gamma,\alpha)$-dense-lineable.
    \end{enumerate}
\end{corollary}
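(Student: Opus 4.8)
The plan is to deduce Corollary \ref{equivalence for pseudometrizable} directly from the already-established machinery by verifying that the hypothesis $w(X) \le \alpha$ is automatically satisfied. First I would invoke Theorem \ref{pseudometrizable}: since $X$ is an infinite-dimensional first-countable TVS, we have $w(X) \le \dim(X)$. Then I would observe that $\alpha$-dense-lineability of $M$ (or pointwise $\alpha$-dense-lineability, or $(\gamma,\alpha)$-dense-lineability) forces the existence of an $\alpha$-dimensional subspace inside $M \cup \{0\}$, hence $\dim(X) \ge \alpha$, so combining with Theorem \ref{pseudometrizable} gives $w(X) \le \dim(X)$. But this inequality alone is not quite $w(X) \le \alpha$ unless $\dim(X) \le \alpha$ as well; so the cleaner route is to handle the two directions slightly differently.

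For the forward implications, suppose $M$ is $\alpha$-dense-lineable (resp.\ pointwise, resp.\ $(\gamma,\alpha)$). Let $Y \subset M \cup \{0\}$ be an $\alpha$-dimensional dense subspace witnessing this. Then $Y$ itself is an infinite-dimensional TVS (in the subspace topology), and since $X$ is first-countable so is $Y$; by Theorem \ref{pseudometrizable} applied to $Y$ we get $w(Y) \le \dim(Y) = \alpha$. Now one can run the arguments of Theorem \ref{equivalences1} and Theorem \ref{equivalences2} verbatim, working inside $Y$ rather than $X$: the only place the ambient weight was used in those proofs was through the inequality $w(Y) \le \alpha$ needed to apply Theorem \ref{many dense subspaces} and Lemma \ref{X-Y dense lineable} to subspaces of $Y$, and that inequality now holds. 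Concretely, for (i) apply Theorem \ref{many dense subspaces} to $Y$ to split it into $\alpha$ many $\alpha$-dimensional dense (in $Y$, hence in $X$) linearly independent subspaces; for (iii), given a $\gamma$-dimensional $W \subset M \cup \{0\}$, pass to the witnessing $Y \supset W$, use $w(Y) \le \alpha$ to apply Lemma \ref{X-Y dense lineable} to $Y \setminus W$ and then Theorem \ref{many dense subspaces}, exactly as in Theorem \ref{equivalences2}; and (ii) follows from (iii) with $\gamma = 1$ just as before. The reverse implications are trivial, since any family witnessing $\alpha$-infinite ($\alpha$-)dense-lineability contains in particular a single $\alpha$-dimensional dense subspace of the required type.

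The only mild subtlety — and the step I would be most careful about — is the passage to the subspace $Y$: one must check that $w(Y) \le \alpha$, and for this it is not enough to know $w(X) \le \dim(X)$, since $\dim(X)$ could exceed $\alpha$. The fix is precisely to apply Theorem \ref{pseudometrizable} to $Y$ directly (a subspace of a first-countable space is first-countable, and $Y$ is infinite-dimensional because $\dim(Y) = \alpha \ge \aleph_0$), yielding $w(Y) \le \dim(Y) = \alpha$. With that in hand everything reduces to the proofs already given. Thus the corollary is essentially a localization of Theorems \ref{equivalences1} and \ref{equivalences2} to the witnessing dense subspace, made possible by the folkloric Theorem \ref{pseudometrizable}. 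I would write it up in a few lines: state that by Theorem \ref{pseudometrizable} the relevant dense witness $Y$ has $w(Y) \le \alpha$, and then say the conclusions follow by repeating the proofs of Theorems \ref{equivalences1} and \ref{equivalences2} with $X$ replaced by $Y$ throughout.
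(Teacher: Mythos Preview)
Your proposal is correct and takes essentially the same approach as the paper: both observe that the proofs of Theorems \ref{equivalences1} and \ref{equivalences2} only require $w(Y)\le\alpha$ for the witnessing $\alpha$-dimensional dense subspace $Y$, and obtain this by applying Theorem \ref{pseudometrizable} directly to $Y$ (which inherits first-countability from $X$ and has $\dim(Y)=\alpha\ge\aleph_0$). You are right to flag the subtlety that applying Theorem \ref{pseudometrizable} to $X$ itself would not suffice when $\dim(X)>\alpha$; the paper handles this the same way, by localizing to $Y$.
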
    
\begin{proof}
In the proofs of Theorem \ref{equivalences1} and Theorem \ref{equivalences2}, we use only that $w(Y)\leq\alpha$, Lemma \ref{X-Y dense lineable} and Theorem \ref{many dense subspaces}. Since $Y$ is first-countable and $\alpha$-dimensional, we have $w(Y)\leq\alpha$. Therefore, the proof follows the same steps used in Theorem \ref{equivalences1} and Theorem \ref{equivalences2}.
\end{proof}

We end this section by proposing a question.

\begin{open}
    From Example \ref{no dense subspace} and Example \ref{d<w with dense subspaces}, it is clear that there is nothing we can say about the existence of infinitely many dense subspaces when $w(X)>\dim(X)$. Is there a nice characterization of all TVSs that admit infinitely many linearly independent dense subspaces in terms of their topology?
\end{open}

\section{Infinite dense-algebrability}\label{alg}
    Throughout this section, we call ``algebra'' an associative linear algebra over $\mathbb K$ and ``topological algebra'' an algebra endowed with a topology for which its sum, product and scalar multiplication are continuous.
    For more details on the subject, we recommend \cite{beckenstein1977topological}. We also denote by $\mathbb P_n$ the set of polynomials in $n$ variables with no constant term.

    If $X$ is an algebra and $S$ is a subset of $X$, we denote by $\G{S}$ the algebra generated by $S$, that is, the smaller subalgebra of $X$ containing $S$. We say $S$ is a generator set for $X$ if $X=\G{S}$. An $\alpha$-generated algebra is an algebra such that $\alpha$ is the smallest cardinality for a generator set.

    If $X$ is commutative, the set $\G{S}$ has the form 
    \begin{equation*}
       \G{S}=\{P(x_1,\dots,x_n): n\in\mathbb N,\;P\in\mathbb P_n,\;x_1,\dots,x_n\in S\}. 
    \end{equation*}

    We say $X$ is an $\alpha$-generated free algebra if there is a subset $S$ of $X$ of cardinality $\alpha$ such that any function from $S$ to an algebra $Y$ can be uniquely extended to a homomorphism from $X$ to $Y$. In this case, we say that $S$ is a set of free generators (SFG) of $X$. In general, for a subset $S$ of $X$, we say $S$ is an SFG if $S$ is an SFG for $\G{S}$. If $X$ is a commutative algebra, $S$ being an SFG for $X$ is equivalent to $X=\G{S}$ and, for all $n\in\mathbb N$, all non-zero polynomial $P\in\mathbb P_n$ and all $x_1,\dots,x_n\in S$ pairwise distinct, having $P(x_1,\dots,x_n)\ne 0$. Equivalently, the set
    $\{x_1^{k_1}\dots x_n^{k_n} : n\in\mathbb N,\;x_1,\dots,x_n\in S,\;k_1,\dots,k_n\in\mathbb N\}$
    is a basis for the vector space $X$.

     We recall the definitions of (dense-) algebrability and strongly (dense-) algebrability.

    If $X$ is an algebra (resp. topological algebra) and $M$ is a subset of $X$, we say $M$ is $\alpha$-algebrable (resp. $\alpha$-dense-algebrable) if there is an $\alpha$-generated subalgebra (resp. a dense $\alpha$-generated subalgebra) of $X$ contained in $M\cup\{0\}$. If $X$ is commutative, we say that $M$ is strongly $\alpha$-algebrable (resp. strongly $\alpha$-dense-algebrable) when $M\cup\{0\}$ contains an $\alpha$-generated free subalgebra (resp. a dense $\alpha$-generated free subalgebra) of $X$.

    We now introduce our definitions related to infinite algebrability.
    
    \begin{definition}\label{def infinite algebrability}
        Given an algebra $X$, a subset $M$ of $X$ and cardinal numbers $\alpha,\beta$ with $\alpha\geq\aleph_0$, we say that $M$ is $\alpha$-infinitely $\beta$-algebrable when there is a family $\{Y_\kappa\}_{\kappa<\alpha}$ of $\beta$-generated subalgebras of $X$ with $Y_\kappa\subset M\cup\{0\}$ for each $\kappa<\alpha$ and $Y_{\kappa_1}\cap Y_{\kappa_2}=\{0\}$ when $\kappa_1\ne\kappa_2$. Furthermore, when $X$ is a topological algebra, we say that $M$ is
        \begin{itemize}
            \item $\alpha$-infinitely $\beta$-dense-algebrable when we can take each $Y_\kappa$ to be a dense subalgebra.

            \item $\alpha$-infinitely strongly $\beta$-algebrable when $X$ is commutative and we can take each $Y_\kappa$ to be a free subalgebra.

            \item $\alpha$-infinitely strongly $\beta$-dense-algebrable when $X$ is commutative and we can take each $Y_\kappa$ to be a dense free subalgebra.
        \end{itemize}
    \end{definition}

    We prove a result analogous to Theorem \ref{many dense subspaces} for infinite strongly dense-algebrability (Theorem \ref{many dense subalgebras}). For that, we will need a version of Lemma \ref{X-Y dense lineable} for free algebras (Theorem \ref{X-Y algebrable}).
    Then, we prove that $\alpha$-infinitely strongly $\alpha$-dense-algebrability is equivalent to strongly $\alpha$-dense-algebrability provided that $w(X)\le\alpha$.
    To finish the session, we provide an example showing that dense-algebrability is not equivalent to infinite $\alpha$-dense-algebrability in general and raise some questions about infinite dense-algebrability.

\begin{lemma}\label{Expanding free generator}
    Let $X$ be a commutative free algebra with a set of free generators $A$ and $S\subset A$. Let $R\subset\G{S}$ be a set of free generators and $x\in X\setminus\G{S}$. Then $R\cup\{x\}$ is a set of free generators.
\end{lemma}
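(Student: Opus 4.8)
The plan is to use the universal property of free algebras directly. Write $B := \langle S\rangle$; by hypothesis $R$ is an SFG of $B$, and since $R \subset B \subset X$ with $B$ generated by $S \subset A$, the algebra $B$ is itself free on $R$. We want to show $R \cup \{x\}$ is an SFG, i.e. that $\langle R \cup \{x\}\rangle$ is free on $R \cup \{x\}$. Equivalently, invoking the characterization recalled in the excerpt: for commutative algebras it suffices to check that for every $n$, every nonzero $P \in \mathbb{P}_{n+1}$, and every choice of pairwise distinct $r_1,\dots,r_n \in R$ (together with the one extra generator $x$), we have $P(r_1,\dots,r_n,x) \neq 0$. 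The one genuinely new phenomenon relative to $B$ being free is the interaction of $x$ with the $r_i$'s, so that is where the argument lives.

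The key step is to exploit that $x \notin \langle S\rangle = B$. First I would fix $r_1,\dots,r_n \in R$ pairwise distinct and a nonzero $P \in \mathbb{P}_{n+1}$, and expand $P$ in powers of its last variable: $P(t_1,\dots,t_n,t_{n+1}) = \sum_{j=0}^{d} Q_j(t_1,\dots,t_n)\, t_{n+1}^{\,j}$ where $Q_d \neq 0$ and $d \geq 0$. Suppose for contradiction that $P(r_1,\dots,r_n,x) = 0$, i.e. $\sum_{j=0}^d Q_j(r_1,\dots,r_n)\, x^{\,j} = 0$ in $X$. Here the elements $c_j := Q_j(r_1,\dots,r_n)$ lie in $B = \langle S\rangle$, and since $R$ is an SFG of $B$ and $Q_d$ is a nonzero polynomial with the $r_i$ pairwise distinct, we have $c_d \neq 0$. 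If $d = 0$ this already contradicts $c_0 = P(r_1,\dots,r_n) \neq 0$, so assume $d \geq 1$. Then the relation $\sum_{j=0}^d c_j x^j = 0$ with $c_j \in B$, $c_d \neq 0$, says precisely that $x$ is algebraic over $B$ inside $X$.

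The main obstacle — and the crux of the proof — is ruling this out using freeness of $X$ on $A$. I would argue as follows: since $A$ is an SFG of $X$, the monomials in the elements of $A$ form a basis of $X$ as a vector space; in particular, choosing a monomial basis adapted to the partition $A = S \sqcup (A \setminus S)$, the algebra $X$ is the polynomial ring $B[\{a : a \in A\setminus S\}]$ in the "remaining" free generators over $B$, hence $X$ is free as a $B$-module on the monomials in $A \setminus S$ and, crucially, $B$ is a retract of $X$ (kill all generators in $A\setminus S$) and $B$ is integrally closed / $X$ has no elements algebraic over $B$ other than those of $B$ — because $X$ is a polynomial ring over the integral domain... wait, $B$ need not be a domain in general, so instead I would phrase it purely combinatorially: write $x = b_0 + \sum_{\text{monomials } m \neq 1 \text{ in } A\setminus S} b_m\, m$ with $b_0, b_m \in B$ (this uses the monomial basis of $X$ over $B$), and note $x \notin B$ means some $b_m \neq 0$ for a monomial $m \neq 1$ in $A \setminus S$. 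Substituting into $\sum_j c_j x^j = 0$ and comparing the coefficient of the top-degree monomial in $A\setminus S$ appearing (take $m_0$ a monomial of maximal total degree $\delta \geq 1$ among those with $b_{m_0}\neq 0$; then $x^d$ contributes $c_d b_{m_0}^{\,d} m_0^{\,d}$ to degree $d\delta$, and no lower power $x^j$, $j<d$, nor any cross term, reaches a monomial of degree $d\delta$ that is $m_0^d$ unless it also comes from $x^d$), one finds the coefficient of $m_0^{\,d}$ is $c_d b_{m_0}^{\,d}$, which must vanish. But $c_d b_{m_0}^{\,d} \in B = \langle S\rangle$, and since $S$ is an SFG of $B$, $B$ is a domain (it's a polynomial ring over $\mathbb{K}$), so $c_d \neq 0$ and $b_{m_0} \neq 0$ force $c_d b_{m_0}^{\,d} \neq 0$ — contradiction. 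This contradiction shows $P(r_1,\dots,r_n,x)\neq 0$, and since $P$, $n$, and the $r_i$ were arbitrary, $R \cup \{x\}$ is an SFG. (I should double-check the degree bookkeeping in the top-monomial comparison, accounting for the case where several $m$'s share the maximal degree; ordering monomials of degree $\delta$ lexicographically and taking $m_0$ maximal in that order handles the tie, since then $m_0^d$ is the lex-maximal monomial of degree $d\delta$ obtainable and only $c_d b_{m_0}^d m_0^d$ lands there.)
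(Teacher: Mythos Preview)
Your argument is essentially correct and follows the same underlying strategy as the paper's proof: expand $P$ in powers of the last variable, use freeness of $R$ to see the top coefficient is nonzero, then isolate a highest-degree contribution coming from generators outside $S$ to obtain a contradiction. The paper reduces to a single extra generator $a\in A$ with $x\in\G{s_1,\dots,s_m,a}\setminus\G{s_1,\dots,s_m}$ and tracks the degree in $a$, while you grade by a degree-lex order on monomials in $A\setminus S$; these are cosmetically different packagings of the same leading-term computation.

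One technical imprecision worth flagging: the algebras here are non-unital (elements of $\G{S}$ are polynomials in $S$ with \emph{no} constant term), so for $j\ge 1$ the polynomial $Q_j$ may have a nonzero constant term and then $c_j:=Q_j(r_1,\dots,r_n)$ is not an element of $B=\G{S}$ at all; likewise the coefficients $b_m$ in your expansion of $x$ need not lie in $B$. The paper handles this by writing each coefficient as $\alpha_i+F_i(\cdot)$ with $\alpha_i\in\mathbb K$ and $F_i\in\mathbb P_n$. Your argument is unaffected once you pass to the unitization $B^+:=\mathbb K\oplus B\cong\mathbb K[S]$ and view $X^+\cong B^+[\{a:a\in A\setminus S\}]$: then $c_j,b_m\in B^+$, one still has $c_d\neq 0$ (if $Q_d$ has nonzero constant part this is immediate, otherwise use that $R$ is an SFG), $B^+$ is an integral domain, and your degree-lex leading-term comparison giving the $m_0^{\,d}$-coefficient $c_d\,b_{m_0}^{\,d}\neq 0$ goes through verbatim.
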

\begin{proof}
    Let $r_1,\dots,r_n\in R$ be pairwise distinct and $P\in\mathbb P_{n+1}$ be a non-zero polynomial. We can write
    $$P(X_1,\dots,X_n,X) = F_0(X_1,\dots,X_n)+\sum_{i=1}^N \alpha_iX^i+F_i(X_1,\dots,X_n)X^i$$
    where $\alpha_1,\dots,\alpha_N\in\mathbb K$, $F_0,\dots,F_N\in\mathbb P_n$ and $F_N$ or $\alpha_N$ is non-zero. We aim to show that $P(\mathrm{r},x)\ne 0$, $\mathrm r := (r_1,\dots,r_n)$, so we assume $N>0$ since this is evident otherwise.
    
    Because $x\not\in\G{S}$, we can find $s_1,\dots,s_m,a\in A$ pairwise distinct such that $r_1,\dots, r_n\in\G{s_1,\dots,s_m}$ and $x\in\G{s_1,\dots,s_m,a}\setminus\G{s_1,\dots,s_m}$. Since $F_i(\mathrm r)\in\G{s_1,\dots,s_m}$ for all $i\in\{0,1,\dots,N\}$, there are $J_0,\dots,J_N\in\mathbb P_m$ such that $F_i(\mathrm r) = J_i(\mathrm s)$, $\mathrm s:=(s_1,\dots,s_m)$, $0\le i\le N$. Let $Q\in\mathbb P_{m+1}$ be such that $x = Q(\mathrm s,a)$. Of course, $Q$ is non-zero, so we can write
    $$Q(X_1,\dots,X_m,X) = G_0(X_1,\dots,X_m)+ \sum_{i=1}^M \beta_iX^i+G_i(X_1,\dots,X_m)X^i$$
    where $M>0$, $\beta_1,\dots,\beta_M\in\mathbb K$, $G_0,\dots,G_M\in\mathbb P_m$ and $G_M$ or $\beta_M$ is non-zero.
    Then
    $$\begin{aligned}
        P(\mathrm r,x) &= J_0(\mathrm s)+\sum_{i=1}^N \alpha_ix^i+J_i(\mathrm s)x^i\\
        &= J_0(\mathrm s)+\sum_{i=1}^N \alpha_iQ(\mathrm s,a)^i+J_i(\mathrm s) Q(\mathrm s,a)^i\\
        &= H_0(\mathrm s)+\sum_{i=1}^{MN}\gamma_ia^i+H_i(\mathrm s) a^i
    \end{aligned}$$
    where $H_i\in \mathbb P_m$ for all $i=0,1,\dots,MN$ and $\gamma_{MN}a^{MN}+H_{MN}(\mathrm s)a^{MN}$ is equal to
    $$\alpha_N(\beta_Ma^M+G_M(\mathrm s)a^M)^N+J_N(\mathrm s)(\beta_Ma^M+G_M(\mathrm s)a^M)^N,$$ 
    which is non-zero since $\alpha_N$ or $F_N$ (therefore $J_N$) and $\beta_M$ or $G_M$ are non-zero and $\{s_1,\dots,s_m,a\}$ is an SFG. Furthermore, for the same reasons,
    $$\gamma_{MN}a^{MN}+H_{MN}(\mathrm s)a^{MN}\ne -\left(H_0(\mathrm s)+\sum_{i=1}^{MN-1} \gamma_i a^i+H_i(\mathrm s) a^i\right)$$
    i.e., $P(\mathrm r,x)\ne 0$.
\end{proof}

\begin{theorem}\label{X-Y algebrable}
    Let $X$ be a commutative topological free algebra with a set of free generators $A$. Let $B, C$ be a partition of $A$ and $\alpha=|C|$ be infinite. If $\alpha\ge w(X)$ and $Y=\G{B}$, then $X\setminus Y$ is strongly $\alpha$-dense-algebrable.
\end{theorem}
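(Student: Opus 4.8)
\section*{Proof proposal}

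The plan is to mimic the proof of Lemma \ref{X-Y dense lineable}, replacing ``avoid a linear span'' by ``avoid a coordinate subalgebra'' and using Lemma \ref{Expanding free generator} to keep the chosen elements freely independent at every stage. Fix an enumeration $\{U_\kappa\}_{\kappa<\alpha}$ of the nonempty members of a basis of $X$ of cardinality $w(X)\le\alpha$, allowing repetitions so that the index set has order type $\alpha$ (if $w(X)<\alpha$ we simply pad with repeats; this costs nothing since density only needs each basic open set to be hit at least once). Write every element of $X$ via the monomial basis attached to the SFG $A$; in particular each element of $X$ involves only finitely many generators from $A$. I would then build, by transfinite recursion on $\kappa<\alpha$, elements $x_\kappa$ with
\begin{equation*}
  x_\kappa\in U_\kappa\setminus\G{B\cup C_{<\kappa}},\qquad\text{where } C_{<\kappa}:=\{c\in C:\ c\text{ occurs in some }x_\lambda,\ \lambda<\kappa\}.
\end{equation*}
Since each $x_\lambda$ uses finitely many generators and $|\kappa|<\alpha=|C|$ (here $\alpha$ infinite is used), we get $|C_{<\kappa}|<|C|$, so $B\cup C_{<\kappa}\subsetneq A$ and $\G{B\cup C_{<\kappa}}$ is a \emph{proper} linear subspace of $X$ (it misses any $c\in C\setminus C_{<\kappa}$, as $c$ is not a linear combination of monomials avoiding $c$). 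A proper subspace of a TVS has empty interior (as already used in Lemma \ref{X-Y dense lineable}), so the nonempty open set $U_\kappa$ is not contained in it and such an $x_\kappa$ exists. Observe that $x_\lambda\in\G{B\cup C_{<\kappa}}$ for every $\lambda<\kappa$, hence $x_\kappa\neq x_\lambda$; thus the $x_\kappa$ are pairwise distinct, and none of them lies in $\G{B}\supseteq B$.

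Next I would verify, by transfinite induction, that $R_\kappa:=B\cup\{x_\lambda:\lambda<\kappa\}$ is a set of free generators for every $\kappa\le\alpha$. For $\kappa=0$ this holds because $B\subset A$ and a subset of an SFG is an SFG (its monomials form a subset of a basis, hence a basis of their span). At a successor step we have $R_\kappa\subset\G{B\cup C_{<\kappa}}$ with $B\cup C_{<\kappa}\subset A$ and $x_\kappa\in X\setminus\G{B\cup C_{<\kappa}}$, so Lemma \ref{Expanding free generator} yields that $R_{\kappa+1}=R_\kappa\cup\{x_\kappa\}$ is an SFG. At a limit step, every finite subset of $R_\kappa$ is contained in some $R_\mu$ with $\mu<\kappa$, and being an SFG is a condition on finite subsets, so $R_\kappa$ is an SFG. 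In particular $D:=R_\alpha=B\cup\{x_\kappa:\kappa<\alpha\}$ is an SFG, and $B$ together with $\{x_\kappa:\kappa<\alpha\}$ partition $D$.

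Finally, put $Z:=\G{\{x_\kappa:\kappa<\alpha\}}$. As a subset of the SFG $D$, the family $\{x_\kappa:\kappa<\alpha\}$ is itself an SFG, of cardinality $\alpha$, so $Z$ is an $\alpha$-generated free subalgebra of $X$. It is dense because $\{x_\kappa:\kappa<\alpha\}\subset Z$ meets every $U_\kappa$ and $\{U_\kappa\}$ is a basis. Expanding in the monomial basis indexed by $D$, every nonzero element of $Y=\G{B}$ is a combination of positive-degree monomials in $B$-variables only, while every nonzero element of $Z$ is a combination of positive-degree monomials in $\{x_\kappa\}$-variables only; these two families of monomials are disjoint and linearly independent, so $Y\cap Z=\{0\}$, i.e. $Z\subset(X\setminus Y)\cup\{0\}$. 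Hence $X\setminus Y$ is strongly $\alpha$-dense-algebrable.

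The genuinely hard combinatorial work has already been isolated in Lemma \ref{Expanding free generator}. The only points requiring care here are the cardinality bookkeeping ensuring $|C_{<\kappa}|<\alpha=|C|$ at every stage (where $\alpha$ infinite and the recursion length $\alpha\ge w(X)$ are used), and the observation that ``being a set of free generators'' passes to unions of chains, which is what lets the recursion survive limit ordinals.
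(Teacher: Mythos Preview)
Your proof is correct and follows essentially the same approach as the paper's: both build the free generators $x_\kappa$ by transfinite recursion, at stage $\kappa$ avoiding the subalgebra generated by $B$ together with the (finitely many per element, hence $<\alpha$ in total) $A$-generators supporting the previously chosen $x_\lambda$, and both invoke Lemma \ref{Expanding free generator} to preserve free independence. Your write-up is in fact slightly more careful than the paper's in two places: you handle the limit step of the induction explicitly (via the finite character of being an SFG), and you spell out the argument that $Z\cap Y=\{0\}$ using the monomial basis of $\G{D}$, whereas the paper leaves both of these implicit.
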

\begin{proof}
    Given $x\in X$, we define $A(x):=\bigcap\{S\subset A: x\in\G{S}\}$. Note that $A(x)\subset A$ and $x\in\G{A(x)}$. Additionally, $A(x)$ is finite; therefore, given $S\subset X$ infinite, $A(S):=\bigcup_{x\in S} A(x)$ has the same cardinality as $S$.

    Let $\{U_\kappa\}_{\kappa<\alpha}$ be a topological basis for $X$ (we do not require index uniqueness). Assume that every $U_\kappa$ is non-empty. We build by transfinite induction elements $\{w_\kappa\}_{\kappa<\alpha}$ such that $w_\kappa\in U_\kappa$ and $B\cup \{w_\lambda\}_{\lambda<\kappa}$ is an SFG.

    As $Y\subsetneq X$, we have $\mathrm{int}(Y)=\varnothing$ and there is $w_0\in B_0\setminus Y$. $B\cup\{w_0\}$ is an SFG according to Lemma \ref{Expanding free generator}.
    
    Let $\kappa<\alpha$ and suppose, by transfinite induction hypothesis, that $\{w_\lambda\}_{\lambda<\kappa}$ is already defined. Let $Y_\kappa:=\G{B\cup A(\{w_\lambda\}_{\lambda<\kappa})}$. Notice $B\cup A(\{w_\lambda\}_{\lambda<\kappa})\subsetneq A$ since $|A(\{w_\lambda\}_{\lambda<\kappa})| = \kappa<\alpha=|A\setminus B|$, so $Y_\kappa\subsetneq X$ and $\mathrm{int}(Y_\kappa)=\varnothing$, hence, there is $w_\kappa\in B_\kappa\setminus Y_\kappa$. Again, since $B\cup\{w_\lambda\}_{\lambda<\kappa}\subset Y_\kappa$, by Lemma \ref{Expanding free generator}, $B\cup\{w_\lambda\}_{\lambda\le\kappa}$ is an SFG.
    
    This shows the existence of the family $\{x_\kappa\}_{\kappa<\alpha}$. The subset $\{x_\kappa\}_{\kappa<\alpha}$ is dense in $X$; therefore, $Z:=\G{\{x_\kappa\}_{\kappa<\beta}}$ is dense in $X$. Additionally, $B\cup\{x_\kappa\}_{\kappa<\alpha}$ is an SFG, so $Z\subset (X\setminus Y)\cup\{0\}$ and $Z$ is an $\alpha$-generated free subalgebra. Thus, we conclude that $X\setminus Y$ is strongly $\alpha$-dense-algebrable.

\end{proof}

\begin{remark}\label{algebrability criterion}
    The previous result seems to be of independent interest since it is a variation of the classical problem concerning the lineability/spaceability of complements of vector subspaces (see \cite{BERNALGONZALEZ20143997,araujo2024general}). We believe that more research can be carried out on general algebrability criteria for sets of the form $X\setminus Y$ where $Y$ is a subalgebra of an algebra $X$.
\end{remark}

Given a family of SFGs $\{F_\kappa\}_{\kappa<\alpha}$, we say that $\{\G{F_\kappa}\}_{\kappa<\alpha}$ is an algebraically independent family if $\bigcup_{\kappa<\alpha}F_\kappa$ is an SFG and $F_{\kappa_1}\cap F_{\kappa_2}=\varnothing$ for each $\kappa_1\neq \kappa_2$.

\begin{theorem}\label{many dense subalgebras}
    Let $X$ be a commutative topological $\alpha$-generated free algebra such that $w(X)\leq\alpha$ and $\alpha$ is infinite. Then, there is a family $\{Y_\kappa\}_{\kappa<\alpha}$ such that:
    \begin{enumerate}
        \item $Y_\kappa$ is an $\alpha$-generated free dense subalgebra of $X$ for all $\kappa<\alpha$;
        \item $\{Y_\kappa\}_{\kappa<\alpha}$ is algebraically independent (in particular, $Y_{\kappa_1}\cap Y_{\kappa_2}=\{0\}$ when $\kappa_1,\kappa_2<\alpha$ and $\kappa_1\neq\kappa_2$);
    \end{enumerate}
\end{theorem}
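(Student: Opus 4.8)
The plan is to run the same scheme as in the proof of Theorem~\ref{many dense subspaces}, with direct sums of linear subspaces replaced by disjoint unions of sets of free generators and Lemma~\ref{X-Y dense lineable} replaced by Theorem~\ref{X-Y algebrable}. Fix a set of free generators $A$ of $X$; since $\alpha$ is infinite, we may partition it as $A=\bigsqcup_{\kappa<\alpha}A_\kappa$ with $|A_\kappa|=\alpha$ for every $\kappa<\alpha$. I would build, by transfinite recursion on $\kappa<\alpha$, sets of free generators $F_\kappa$, each of cardinality $\alpha$ and with $\G{F_\kappa}$ dense in $X$, maintaining the invariant that for every $\kappa<\alpha$ the set
\[
  G_\kappa:=\Big(\bigcup_{\mu>\kappa}A_\mu\Big)\cup\Big(\bigcup_{\mu\le\kappa}F_\mu\Big)
\]
is a set of free generators of $X$. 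The family $Y_\kappa:=\G{F_\kappa}$ will then be the one required by the theorem.

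At stage $\kappa$, assuming $F_\mu$ is defined for all $\mu<\kappa$ and that the invariant holds for all $\mu<\kappa$, I would first check that
\[
  H_\kappa:=\Big(\bigcup_{\mu\ge\kappa}A_\mu\Big)\cup\Big(\bigcup_{\mu<\kappa}F_\mu\Big)
\]
is a set of free generators of $X$; this is the coherence step corresponding to verifying that the sum $S$ is direct in the proof of Theorem~\ref{many dense subspaces}, and it relies on the fact that being a set of free generators has finite character. Indeed, any finite subset of $H_\kappa$ meets only finitely many of the $F_\mu$, say with indices $\le\lambda$ for some $\lambda<\kappa$, so it is contained in $G_\lambda$, which is a set of free generators by hypothesis; hence it consists of pairwise distinct, algebraically independent elements. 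Next I would apply Theorem~\ref{X-Y algebrable} to the subalgebra $X':=\G{H_\kappa}$ equipped with the subspace topology, using the partition $H_\kappa=B\sqcup C$ with $B:=(\bigcup_{\mu>\kappa}A_\mu)\cup(\bigcup_{\mu<\kappa}F_\mu)$ and $C:=A_\kappa$: here $X'$ is a commutative topological free algebra with set of free generators $B\cup C$, the set $C$ has infinite cardinality $|C|=\alpha\ge w(X)\ge w(X')$, and $X'$ is dense in $X$ because it contains $\G{F_0}$ (or equals $X$ when $\kappa=0$), which is dense by the invariant. The proof of Theorem~\ref{X-Y algebrable} in fact produces a set of free generators $F_\kappa\subset X'$ of cardinality $\alpha$ with $\G{F_\kappa}$ dense in $X'$, $\G{F_\kappa}\subset(X'\setminus\G{B})\cup\{0\}$, and $B\cup F_\kappa$ again a set of free generators. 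Therefore $\G{F_\kappa}$ is dense in $X$, and $G_\kappa=B\cup F_\kappa$ is a set of free generators, so the invariant is preserved; moreover, since the elements of $F_\kappa$ lie outside $\G{B}\supseteq\bigcup_{\mu<\kappa}F_\mu$, we obtain $F_\kappa\cap F_\mu=\varnothing$ for all $\mu<\kappa$, and likewise $F_\kappa\cap A_\mu=\varnothing$ for $\mu>\kappa$, which also confirms that $B\sqcup C$ is a genuine partition at every stage.

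Once the recursion is complete, I would read off the statement. Each $Y_\kappa=\G{F_\kappa}$ is an $\alpha$-generated free dense subalgebra of $X$, since $F_\kappa$ is a set of free generators of cardinality $\alpha$ and $\G{F_\kappa}$ is dense. The sets $\{F_\kappa\}_{\kappa<\alpha}$ are pairwise disjoint by the previous paragraph, and $\bigcup_{\kappa<\alpha}F_\kappa$ is a set of free generators, once more by finite character: any finite subset lies in $\bigcup_{\mu\le\lambda}F_\mu\subseteq G_\lambda$ for a suitable $\lambda<\alpha$. Hence $\{Y_\kappa\}_{\kappa<\alpha}$ is algebraically independent; in particular $Y_{\kappa_1}\cap Y_{\kappa_2}=\{0\}$ for $\kappa_1\ne\kappa_2$, because the generators involved belong to a single set of free generators.

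I expect the main obstacle to be the bookkeeping in the coherence step — confirming that $H_\kappa$ is a set of free generators at every passing stage — together with making precise that it is the proof of Theorem~\ref{X-Y algebrable}, rather than just its statement, which yields the extension property that $B\cup F_\kappa$ is again a set of free generators; this extension property is exactly what allows the transfinite recursion to close, playing the role that the direct-sum condition plays in the proof of Theorem~\ref{many dense subspaces}.
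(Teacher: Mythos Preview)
Your proposal is correct and follows essentially the same transfinite-recursion scheme as the paper: partition a set of free generators into $\alpha$ blocks of size $\alpha$, verify the coherence step at each stage via the finite character of algebraic independence, and invoke (the proof of) Theorem~\ref{X-Y algebrable} to produce the next $F_\kappa$ together with the extension property $B\cup F_\kappa$ is an SFG. One small slip: $G_\kappa$ and $H_\kappa$ are sets of free generators for the subalgebras they generate, not for $X$ itself---you evidently realize this, since you pass to $X'=\G{H_\kappa}$ and argue its density in $X$---and your assignment $C=A_\kappa$, $B=H_\kappa\setminus A_\kappa$ is precisely the one that makes the invariant $G_\kappa=B\cup F_\kappa$ close.
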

\begin{proof}
    Let $A$ be an SFG of $X$, $\{A_\kappa\}_{\kappa<\alpha}$ be a partition of $A$ with $|A_\kappa|=\alpha$ and $X_\kappa:=\G{A_\kappa}$ for each $\kappa<\alpha$. Then $\{X_\kappa\}_{\kappa<\alpha}$ is an algebraically independent family. We build by transfinite induction a family $\{Y_\kappa\}_{\kappa}$ of $\alpha$-generated dense free subalgebras of $X$ such that for each $\kappa<\alpha$ the family $\{X_\mu\}_{\mu>\kappa}\cup\{Y_\mu\}_{\mu\le\kappa}$ is algebraically independent.

    Let $\kappa<\alpha$ and suppose, by transfinite induction hypothesis, that $\{Y_\lambda\}_{\lambda<\kappa}$ is already defined such that for each $\lambda<\kappa$, the family $\{X_\mu\}_{\mu>\lambda}\cup\{Y_\mu\}_{\mu\le\lambda}$ is algebraically independent. Then, $\{X_\mu\}_{\mu\ge \kappa}\cup\{Y_\mu\}_{\mu<\kappa}$ is algebraically independent. In fact, let $F_\lambda$ be an SFG of $Y_\lambda$ for each $\lambda<\kappa$ and $x_1,x_2,\dots,x_n,y_1,\dots,y_m$ be elements of $\left(\bigcup_{\mu\geq\kappa}A_\mu\right)\cup\left(\bigcup_{\mu<\kappa}F_\mu\right)$ where $x_i\in F_{\zeta_i}$ and $y_j\in A_{\eta_j}$ with $\zeta_i<\kappa\leq\eta_j$, $i\in\{1,\dots,n\}$, $j\in\{1,\dots,m\}$. Let $\lambda$ be so that $\zeta_i\leq\lambda<\kappa\le\eta_j$ for all $i\in\{1,\dots,n\}$ and $j\in\{1,\dots,m\}$. Then $\{X_\mu\}_{\mu>\lambda}\cup\{Y_\mu\}_{\mu\le\lambda}$ is algebraically independent, so $P(x_1,x_2,\dots,x_m,y_1,y_2,\dots,y_n)\ne 0$ for every non-zero polynomial in $n+m$ variables, as we wanted.

    Therefore, $Z:=\G{\{A_\mu\}_{\mu\geq \kappa}\cup\{F_\mu\}_{\mu<\kappa}}$ satisfies the hypothesis for Theorem \ref{X-Y algebrable} with $B:=A_\kappa$ and $C:=\left(\bigcup_{\mu>\kappa}A_\mu\right)\cup\left(\bigcup_{\mu<\kappa}F_\mu\right)$; hence, there is an SFG $F_\kappa$ such that $|F_\kappa|=\alpha$, $Y_\kappa:=\G{F_\kappa}$ is a dense subspace of $Z$ and $\left(\bigcup_{\mu>\kappa}A_\mu\right)\cup\left(\bigcup_{\mu\leq \kappa}F_\mu\right)$ is an SFG. Of course, $Z$ is dense in $X$, thus, so is $Y_\kappa$. This shows the existence of the family $\{Y_\kappa\}_{\kappa<\alpha}$.
\end{proof}

\begin{theorem}\label{criterion equivalence infinite algebrability}
    Let $X$ be a commutative topological algebra, $M$ a subset of $X$, $\alpha\geq\aleph_0$ a cardinal and $\alpha\geq w(X)$. Then, $M$ is $\alpha$-infinitely strongly $\alpha$-dense-algebrable if, and only if, $M$ is strongly $\alpha$-dense-algebrable.
\end{theorem}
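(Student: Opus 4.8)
The plan is to mirror the proof of Theorem \ref{equivalences1}, using Theorem \ref{many dense subalgebras} in place of Theorem \ref{many dense subspaces}. The implication ``$\alpha$-infinitely strongly $\alpha$-dense-algebrable $\Rightarrow$ strongly $\alpha$-dense-algebrable'' is immediate: any single member $Y_0$ of the witnessing family $\{Y_\kappa\}_{\kappa<\alpha}$ is already a dense $\alpha$-generated free subalgebra contained in $M\cup\{0\}$.

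For the converse, suppose $M$ is strongly $\alpha$-dense-algebrable and fix a dense $\alpha$-generated free subalgebra $Y\subset M\cup\{0\}$. First I would check that $Y$, endowed with the subspace topology, satisfies the hypotheses of Theorem \ref{many dense subalgebras}: it is commutative (inherited from $X$); the restrictions of sum, product and scalar multiplication stay continuous, so it is a topological algebra; it is $\alpha$-generated free by assumption; and $w(Y)\le w(X)\le\alpha$, since restricting a base of $X$ to $Y$ produces a base of $Y$ of no larger cardinality. As $\alpha$ is infinite, Theorem \ref{many dense subalgebras} then yields a family $\{Y_\kappa\}_{\kappa<\alpha}$ of $\alpha$-generated free subalgebras of $Y$, each dense in $Y$, with $\{Y_\kappa\}_{\kappa<\alpha}$ algebraically independent; in particular $Y_{\kappa_1}\cap Y_{\kappa_2}=\{0\}$ whenever $\kappa_1\ne\kappa_2$.

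It remains to transfer these properties from $Y$ to $X$. Each $Y_\kappa$ is a subalgebra of $Y$, hence of $X$, and $Y_\kappa\subset Y\subset M\cup\{0\}$. Since $Y_\kappa$ is dense in $Y$ and $Y$ is dense in $X$, transitivity of density gives that $Y_\kappa$ is dense in $X$. Therefore the family $\{Y_\kappa\}_{\kappa<\alpha}$ witnesses that $M$ is $\alpha$-infinitely strongly $\alpha$-dense-algebrable.

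I do not expect a genuine obstacle here; the only points requiring care are verifying that the dense free subalgebra $Y$ inherits the structural hypotheses of Theorem \ref{many dense subalgebras}, chiefly the weight bound $w(Y)\le\alpha$, and the routine transitivity of density.
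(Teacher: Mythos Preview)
Your proposal is correct and follows essentially the same approach as the paper's proof: pick a dense $\alpha$-generated free subalgebra $Y\subset M\cup\{0\}$, note that $w(Y)\le w(X)\le\alpha$, apply Theorem \ref{many dense subalgebras} to $Y$, and then use transitivity of density. You are slightly more explicit than the paper in verifying the hypotheses of Theorem \ref{many dense subalgebras} for $Y$ and in stating the trivial direction, but there is no substantive difference.
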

\begin{proof}
    Let $Y\subset M\cup\{0\}$ be a dense $\alpha$-generated free subalgebra of $X$. Since $w(Y)\leq w(X)\leq \alpha$, according to Theorem \ref{many dense subalgebras}, there is an algebraically independent family $\{Y_\kappa\}_{\kappa<\alpha}$ of dense subalgebras of $Y$. Because $Y$ is dense in $X$, then $Y_\kappa$ is dense in $X$ for each $\kappa<\alpha$. Therefore, $M$ is $\alpha$-infinitely strongly $\alpha$-dense-algebrable.
\end{proof}

\begin{theorem}\label{equivalence inf alg firs-countable}
    Let $X$ be a commutative topological $\alpha$-generated algebra and $M$ a subset of $X$. If $X$ is first-countable and $\alpha\geq\aleph_0$, then $M$ is $\alpha$-infinitely strongly $\alpha$-dense-algebrable if, and only if, $M$ is strongly $\alpha$-dense-algebrable.
\end{theorem}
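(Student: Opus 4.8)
The plan is to reduce this to Theorem \ref{criterion equivalence infinite algebrability} by establishing the weight bound $w(X) \le \alpha$ for a commutative topological $\alpha$-generated first-countable algebra. The point is that Theorem \ref{criterion equivalence infinite algebrability} already gives the equivalence whenever $w(X) \le \alpha$, and the reverse implication ($\alpha$-infinitely strongly $\alpha$-dense-algebrable $\Rightarrow$ strongly $\alpha$-dense-algebrable) is trivial since any one member of the family $\{Y_\kappa\}$ witnesses strong $\alpha$-dense-algebrability. So the entire content is: \emph{a first-countable commutative topological $\alpha$-generated algebra satisfies $w(X) \le \alpha$}.

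For this, I would mimic the proof of Theorem \ref{pseudometrizable} closely. Since $X$ is first-countable as a topological vector space (forgetting the multiplication), \cite[Theorem 4.8.3]{beckenstein2010} makes it pseudometrizable, with pseudometric $d$. Let $A$ be a generator set for $X$ with $|A| = \alpha$. Because $X = \G{A}$ and $X$ is commutative, every element of $X$ is of the form $P(a_1, \dots, a_n)$ with $P \in \mathbb{P}_n$ and $a_i \in A$; the set of all such polynomial expressions with \emph{rational} (or, more precisely, $S$-valued for a countable dense $S \subset \mathbb{K}$) coefficients forms a set $T$ with $|T| = \alpha$, since there are only countably many polynomials in each fixed finite set of variables over a countable coefficient set and $\alpha$ is infinite. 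The key sub-claim is that $T$ is dense in $X$: given $x = P(a_1, \dots, a_n)$ with arbitrary coefficients in $\mathbb{K}$, approximate each coefficient by a sequence in $S$; continuity of addition, scalar multiplication \emph{and multiplication} then forces the corresponding polynomial expressions to converge to $x$. (Here is where we use that $X$ is a topological \emph{algebra} rather than merely a TVS — we need products $a_{i_1} \cdots a_{i_k}$ to behave continuously, but since they are fixed elements and only the scalar coefficients vary, even TVS-continuity suffices; the algebra structure is used only to know that $T \subseteq X$ and spans a dense set.) Once $T$ is dense, the family $C := \bigcup_{x \in T} \{B(x, r) : r \in \mathbb{Q}_{>0}\}$ has cardinality $\alpha$ and is a topological basis by the exact triangle-inequality argument in Theorem \ref{pseudometrizable}. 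Hence $w(X) \le \alpha$.

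With $w(X) \le \alpha$ in hand, Theorem \ref{criterion equivalence infinite algebrability} applies verbatim and delivers the stated equivalence. I do not anticipate a serious obstacle: the only point requiring a little care is the density of $T$, and specifically making sure that a polynomial in finitely many fixed generators, with coefficients perturbed into a countable dense set, really converges to the target — this is a finite product-and-sum of fixed vectors scaled by convergent scalar sequences, so joint continuity of the vector-space operations handles it. One should also note explicitly that an $\alpha$-generated algebra with $\alpha \ge \aleph_0$ is infinite-dimensional, so that the cited pseudometrizability theorem and the counting $|T| = \alpha$ (rather than $|T| < \alpha$) are legitimately invoked.
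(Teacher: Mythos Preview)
Your proposal is correct and follows essentially the same route as the paper: reduce to Theorem~\ref{criterion equivalence infinite algebrability} by bounding the weight via pseudometrizability and a countable-coefficient dense set, exactly as in Theorem~\ref{pseudometrizable}. The only cosmetic difference is that the paper, mirroring Corollary~\ref{equivalence for pseudometrizable}, applies the weight bound to the witnessing dense free subalgebra $Y$ (which is first-countable as a subspace and has $\dim(Y)=\alpha$ since it is $\alpha$-generated and free) rather than to $X$ itself; your version instead uses the hypothesis that $X$ is $\alpha$-generated to bound $w(X)$ directly, which is equally valid.
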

\begin{proof}
    Analogous to Corollary \ref{equivalence for pseudometrizable}.
\end{proof}

\begin{example}\label{no equivalence inf alg}
    There are dense-algebrable subsets of topological algebras that are not infinitely dense-algebrable.
        
    Let $\mathbb R^\infty$ be as in Example \ref{no dense subspace}. We define over $\mathbb R^\infty$ the product $(x_n)\cdot(y_n):=(x_ny_n)$. It is clear the set $B:=\{\mathbb R^\infty\cap\left(\prod_{n=1}^\infty(-r_n,r_n)\right): 0<r_n\le 1 \;\forall n\in\mathbb N\}$ is a local basis at $0$. Since $\mathbb R^\infty$ is a locally convex TVS (\cite[Proposition 5.4]{robertson1973}) and $B$ is a local basis at $0$ containing only idempotents sets ($U$ is said idempotent when $U\cdot U\subset U$), $\mathbb R^\infty$ is a topological algebra by \cite[4.3.1]{beckenstein1977topological}.

    Therefore, $\mathbb R^\infty$ is an example of an infinitely generated topological algebra without any proper subalgebra (provided that subalgebras are linear subspaces in particular). Hence, $\mathbb R^\infty$ is dense-algebrable, but not infinitely dense-algebrable. 
\end{example}

We end that work with two more questions.

\begin{open}
    Let $X$ be a topological algebra with $w(X)\leq\alpha$ where $\alpha$ is an infinite cardinal and $M$ a subset of $X$. If $M$ is $\alpha$-dense-algebrable, then is $M$ $\alpha$-infinitely $\alpha$-dense-algebrable?
\end{open}

\begin{open}
    Is there a subset of a commutative topological algebra that is strongly $\alpha$-dense-algebrable, but not infinitely $\alpha$-dense-algebrable?
\end{open}

\bibliographystyle{ieeetr}

\bibliography{bibliography.bib}

\Addresses

\end{document}